\documentclass[ejs]{imsart}
% LTeX: enabled=false
\usepackage{amsthm,amsmath}
\RequirePackage[OT1]{fontenc}
\RequirePackage[numbers]{natbib}
\RequirePackage[colorlinks,citecolor=blue,urlcolor=blue]{hyperref}
\usepackage{mathtools, bm, bbold}
\usepackage{amssymb}
\usepackage{appendix}
\usepackage[ruled]{algorithm2e}
\usepackage{parskip}
\usepackage{enumitem}  
\usepackage{booktabs}
\usepackage{colortbl}
\usepackage{tabu}
\usepackage{float}

% will be filled by editor:
\doi{10.1214/154957804100000000}
\pubyear{0000}
\volume{0}
\firstpage{1}
\lastpage{1}
%\arxiv{}

% put your definitions there:
\startlocaldefs
        \newtheorem{theorem}{Theorem}[subsection]
        
        \newtheorem{lemma}[theorem]{Lemma}
        \newtheorem{definition}[theorem]{Definition}
        \newtheorem{example}[theorem]{Example}
        \newtheorem{property}[theorem]{Property}
        \newtheorem{remark}[theorem]{Remark}

        \makeatletter
        \newcommand{\@giventhatstar}[2]{\left(#1\;\middle|\;#2\right)}
        \newcommand{\@giventhatnostar}[3][]{#1(#2\;#1|\;#3#1)}
        \newcommand{\giventhat}{\@ifstar\@giventhatstar\@giventhatnostar}
        \makeatother

        \renewcommand{\phi}{\varphi}
        \newcommand{\indep}{\perp\!\!\!\perp}
        
        \newcommand{\norme}[1]{\lVert #1\rVert_{2}^{2}}
        \newcommand{\scalarprod}[2]{\,\langle #1,#2\rangle}
        \DeclareMathOperator*{\argmin}{arg\,min}
        \DeclareMathOperator*{\Times}{\bigtimes}
        \DeclareMathOperator*{\BigCup}{\bigcup}
        
        \newcommand{\pluseq}{\mathrel{{+}{=}}}
        
        \renewcommand{\epsilon}{\ensuremath{\varepsilon}}
        \SetKwInput{KwSideProd}{Side-product}
\endlocaldefs

\newenvironment{DIFnomarkup}{}{}

\begin{document}

\begin{frontmatter}

% "Title of the Paper"
\title{Estimation of multivariate generalized gamma convolutions through Laguerre expansions.\thanksref{t1}}
\thankstext{t1}{This is an original paper}
\runtitle{Estimation of multivariate generalized gamma convolutions}

% Authors
\author{\fnms{Oskar} \snm{Laverny}\corref{}\ead[label=e1]{oskar.laverny@univ-lyon1.fr}}
\address{Institut Camille Jordan, UMR 5208,\\
Université Claude Bernard Lyon 1, Lyon, France\\
SCOR SE\\\printead{e1}}
\author{\fnms{Esterina} \snm{Masiello}\ead[label=e2]{esterina.masiello@univ-lyon1.fr}}
\and
\author{\fnms{Véronique} \snm{Maume-Deschamps}\ead[label=e3]{veronique.maume@univ-lyon1.fr}}
\address{Institut Camille Jordan, UMR 5208,\\
Université Claude Bernard Lyon 1, Lyon, France\\
\printead{e2,e3}}
\author{\fnms{Didier} \snm{Rullière}\ead[label=e4]{didier.rulliere@emse.fr}}
\address{Mines Saint-Étienne,\\
UMR CNRS 6158, LIMOS, Saint-Étienne, France\\
\printead{e4}}
\runauthor{O. Laverny et al.}

% LTeX: enabled=true
\begin{abstract}
The generalized gamma convolutions class of distributions appeared in Thorin's work 
  while looking for the infinite divisibility of the log-Normal and Pareto distributions.
Although these distributions have been extensively studied in the univariate case,
  the multivariate case and the dependence structures that can arise from it have received little interest in the literature.
Furthermore,
  only one projection procedure for the univariate case was recently constructed,
  and no estimation procedures are available.
By expanding the densities of multivariate generalized gamma convolutions into a tensorized Laguerre basis,
  we bridge the gap and provide performant estimation procedures for both the univariate and multivariate cases.
We provide some insights about performance of these procedures,
  and a convergent series for the density of multivariate gamma convolutions,
  which is shown to be more stable than Moschopoulos's and Mathai's univariate series.
We furthermore discuss some examples.
\end{abstract}
% LTeX: enabled=false

\begin{keyword}[class=MSC]
\kwd[Primary ]{62H12}
\kwd{60E07}
\kwd[; secondary ]{60E10}
\end{keyword}
% Codes:
% 60E07 Infinitely divisible distributions; stable distributions
% 60E10 Characteristic functions; other transforms
% 62H12 Estimation in multivariate analysis

\begin{keyword}
\kwd{Multivariate Generalized Gamma Convolutions}
\kwd{Estimation}
\kwd{Thorin's measure}
\kwd{Laguerre's basis}
\kwd{Infinite Divisibility}
\end{keyword}

% history:
% \received{\smonth{1} \syear{0000}}

\tableofcontents

\end{frontmatter}
% LTeX: enabled=true
\section{Introduction}

Olof Thorin introduced the Generalized Gamma Convolutions (GGC) class of distributions in 1977
  as a tool to study the infinite divisibility of Pareto~\cite{thorin1977}
  and log-Normal~\cite{thorin1977a} distributions.
The quite natural and pragmatic question of identifying if some distributions are infinitely divisible or not 
  turned out to be theoretically fruitful, 
  and notably gave rise to the concepts of hyperbolic completely monotony and generalized gamma convolutions.
Later,
  Lennart Bondesson extended these concepts in a book~\cite{bondesson1992} 
  which is still today a good reference on the subject.
An introduction to more recent literature about GGC can be found in the two surveys~\cite{james2008,bondesson2015},
  on the probabilistic side of the problem.

The GGC class contains a lot of known and commonly used distributions,
  including heavy-tailed distributions such as Pareto,
  log-Normal and $\alpha$-stable distributions.
This versatility makes the class appealing for the statistical practitioners
  working in various fields such as life and non-life insurance,
  reinsurance,
  anomaly or fraud detection,
  floods analysis and meteorology,
  etc.
Other classes of general approximating distributions with high-order features can be considered.
Mixtures of Erlangs~\cite{cossette2016,lee2012,tijms2003},
  for example,
  have the property of being dense in $L_2$~\cite{tijms2003}
  and allow for fast $k$-MLE estimation algorithms~\cite{nielsen2012}.
On the other hand, 
  they lack crucial closure properties and interpretability that the generalized gamma convolutions have.

Surprisingly,
  on the statistical side,
  very little work can be found about estimation of generalized gamma convolutions.
Only one projection procedure, 
  that is a procedure that projects generalized gamma convolutions onto convolutions of a finite number of gammas,
  was published recently in~\cite{miles2019,furman2017},
Although the resulting convergence result is stunning,
  this procedure cannot handle cases where the incoming density is not already inside the class.
This typically happens for any empirical distribution, 
  due to sampling noise, 
  even if the sampling density is in the class.
Furthermore,
  we show that the theoretical background of the estimation procedure in~\cite{miles2019,furman2017} is inherently univariate,
  and no direct extension to a multivariate case is possible.

The multivariate analogue is still an active research field,
  and, 
  as far as we know,
  has never been considered as a statistical tool in the literature.
In applications, however, statisticians might deal with marginals that are in this class.
A way to handle dependence structure in this interpretable framework seems therefore
  appealing as it would allow studying functions of the random vector,
  like the sum of components,
  under dependence assumptions.
Therefore,
  an estimation procedure for multivariate generalized gamma convolutions that
  takes into account the dependence structure could be useful.
The probabilistic background is already developed:
  based on an original idea from~\cite{cherian1941},
  Bondesson introduces a bivariate extension of the Thorin class,
  and gives some properties about these distributions.
Later,
  \cite{bondesson2009} extends this concept to the multivariate case.
More recently,
  \cite{perez-abreu2012,perez-abreu2014} propose another equivalent framework allowing extension on any cone,
  with application to matrix gamma convolutions.
On the other hand,
  no estimation procedure nor explicit density exist.
We discuss this multivariate class and show that the produced dependence structures are highly flexible:
  non-exchangeability, left tail and right tail dependency are easily achievable.

Regarding tools,
  the literature gives two series expansions for the density of univariate gamma convolutions:
  Mathai~\cite{mathai1982} proposes a density based on Kummer's confluent geometric functions and
  Moschopoulos~\cite{moschopoulos1985} refined it as a convergent gamma series,
  with an explicit truncation error.
Both these densities have the problem of being based on the smallest scale parameter,
  and are not well conditioned when the smallest scale is too small.
Unfortunately,
 this is typically the case when approximating a log-Normal or a Pareto distribution by a finite convolution of gammas.
Therefore,
  no stable procedure for the density computation is available for the entirety of the parameter range.
We are not aware of any density estimation available in the literature for the multivariate case.

We consider here the problem of estimation from samples of multivariate generalized gamma convolutions.
Indeed, 
  in some practical cases such as insurance losses modeling,
  the random variable we model is supposed to be the sum of a random number of independent and identically distributed (i.i.d.) random amounts,
  and hence is inherently divisible.
This is one of the reason that led Thorin to study these concepts:
  he wanted to ensure that the approximating models (a log-Normal or Pareto distribution for example) fulfil this property.
We want to leverage this divisibility for joint statistical analysis.
The parametric divisibility,
  i.e., the possibility of constructing the parametric distribution of the piece easily,
  is therefore an appealing property for a model.
The multivariate generalized gamma convolution class is one of these models
  that allows easy parametric division of the estimated distributions in a multivariate setting,
while staying general enough and allowing for a variety of dependence structure and marginal behaviors.

By the introduction of a specific Laguerre basis,
  which was already considered for non-parametric deconvolution problems by~\cite{dussap2020,mabon2017,benhaddou2019},
  we manage to obtain a series expansion for the density.
Classical deconvolution problems usually consider only one source of signal and one source of noise~\cite{meister2009,mendel1990,zhang2016,sprott1983}.
Although we have here a finite number of signals to be estimated,
  the Laguerre expansion is still a useful tool.
We show that this expansion is quite natural for our problem,
  as it includes and generalizes Moschopoulos's univariate series.
Moreover,
  we introduce a new quantification of the “well-behavior” of a multivariate gamma convolution, 
  that we show to be equivalent to the exponential decay of Laguerre coefficients.
Using this quantification,
  we bridge the gaps and provide a new stable algorithm for the evaluation of theoretical densities
  in both the univariate and multivariate cases,
  as well as consistent parameter estimation procedures which handle both clean data 
  (the density to be estimated is given as a formal function)
  and dirty data (such as an empirical dataset in 64 bits precision). 
The resulting algorithms are implemented in the Julia package \texttt{ThorinDistributions.jl}, 
  available on GitHub\footnote{\texttt{ThorinDistributions.jl}: \url{https://github.com/lrnv/ThorinDistributions.jl}.}
  under a MIT License and archived in Zenodo~\citep{ThorinDistributions}.

After fixing notations,
  Section~\ref{sec:background_and_notations} covers some definitions,
  properties, and algorithms to set up the stage.
Section~\ref{sec:estimation} considers the Laguerre expansion of densities in the multivariate Thorin class,
  studies their regularity and discusses the control of their error through the new quantification
  and develops an estimation procedure for these distributions.
In Section~\ref{sec:investigation},
  we investigate the numerical results of our estimation procedure on several examples.
Section~\ref{sec:conclusion} concludes and gives leads for further work.

\section{Gamma convolutions classes}\label{sec:background_and_notations}

As we deal with a lot of multivariate objects,
  we start by fixing some notations.

We use bold letters such as $\bm a$ to designate generally indexable objects,
  e.g., vectors or matrices,
  and corresponding indexed and possibly unbolded letters,
  such as $a_i$,
  designate values in these objects.
We use Cartesian indexing.
For example,
  if we consider a row-major matrix $\bm r$,
  then $\bm r_i$ denotes the $i^\text{th}$ row of that matrix,
  $r_{i,j}$ the $j^{\text{th}}$ value of that row and if $\bm k = (i,j)$,
  then $r_{\bm k} = r_{i,j}$.

We denote $\lvert\bm x \rvert = x_1 + \ldots + x_d$ the sum of components of a vector $\bm x$.
The product of factorials of component of an integer vector $\bm k$ is denoted $\bm k ! = k_1!...k_d!$,
  and we set $\bm x^{\bm k} = x_1^{k_1}\ldots x_d^{k_d}$.
We denote derivatives of a multivariate function $f$ by 
  $f^{(\bm k)}(\bm x) = \frac{\partial}{\partial^{k_1} x_1}\ldots\frac{\partial}{\partial^{k_d}x_d}(\bm x)$
  and scalar products of vectors by $\langle\bm x,\bm y\rangle = x_1y_1 + \ldots + x_dy_d$.
Finally, we denote $\binom{\bm x}{\bm y} = \prod_i \binom{x_i}{y_i}$ the product of binomial coefficients.

Inequalities,
  sums,
  and products between vectors of same shape are always intended componentwise,
  and standard broadcasting between objects of different shapes applies.
For example,
  $1 + 2\bm x$, $\bm 1 + \bm 2 \bm x$ and $(1 + 2x)_{x \in \bm x}$ all denote the same object,
  which has same shape as $\bm x$.

All over the paper, 
  $d \in \mathbb N_*$ and $\bm X = \left(X_1,...,X_d\right)$ 
  will denote a $d$-variate positive and real-valued random vector,
  abbreviated $X = X_1$ when $d=1$. 

To characterize the distribution of the random vector $\bm X$, 
  we might use several functions of the slack variables $\bm x \in \mathbb R_+^d$ and $\bm t \in \mathbb C^d$,
  with the same convention that $x = x_1$, $t = t_1$ if $d=1$. 
For a (real-valued) random vector $\bm X$ of dimension $d \ge 1$,
  the cumulative distribution function (cdf) of $\bm X$ is $F(\bm x) = \mathbb P\left(\bm X \le  \bm x\right)$.
When they exist,
  the probability density function (pdf) of $\bm X$,
  $f(\bm x) = \frac{\partial }{\partial \bm x}F(\bm x)$,
  its moment generating function (mgf) 
  $M(\bm t) = \mathbb E\left(e^{\langle\bm t, \bm x\rangle}\right)$ 
  or its cumulant generating function (cgf) 
  $K(\bm t) = \ln(M(\bm t))$ can also be used to characterize the distribution.
Note that moment generating functions and cumulant generating functions are not always defined on the whole $\mathbb C^d$.
We nevertheless consider by default $\bm t \in \mathbb C^d$ as there is usually no ambiguity on their domains.  

We denote by $L_2(\mathbb{R}_+^{d},w)$ the set of functions
  that are Lebesgue square-integrable on the space $\mathbb R_{+}^d$
  w.r.t. a weight function $w$,
  and we abbreviate it by $L_2(\mathbb{R}_+^d)$ when $w$ is the identity function.

The following subsections set the stage of our analysis by reviewing common tools from the literature.

\subsection{Definitions and first properties}

A random variable $X$ is said to be Gamma distributed with shape~$\alpha\in \mathbb{R}_{+}^{*}$,
  scale~$s \in \mathbb{R}_{+}^{*}$ and rate~$\frac{1}{s}$,
  which we denote $X \sim \mathcal{G}_{1,1}(\alpha,s)$,
  if it admits the cumulant generating function (cgf):
  $$K(t) = \ln \mathbb{E}\left(e^{tX}\right) =  -\alpha \ln \left(1 - s t\right).$$ 

We denote by $\mathcal{G}_{1,1}$ this class of distributions.
Recall that the cumulant generating function of the sum of independent random variables is the sum of their cumulant generating functions.
Therefore, gamma distributions are clearly infinitely divisible with gamma distributed pieces 
  (same scale and smaller shape).
We now consider independent convolutions of these distributions:

\begin{definition}[$\mathcal{G}_{1,n}$ and $\mathcal{G}_{1}$]\label{univ_ggc_def} Let $X$ be a random variable with support $\mathbb R_+$.
	\begin{enumerate}[label=(\roman*)] 
		\item $X$ is a gamma convolution with shapes $\bm \alpha\in \mathbb{R}_{+}^{n}$ and scales $\bm s\in \mathbb{R}_{+}^{n}$,
		  denoted $X \sim \mathcal{G}_{1,n}(\bm \alpha,\bm s)$,
		  if it has cgf:
		  $$K(t) =  - \sum_{i=1}^n \alpha_i \ln \left(1 - s_i t\right).$$
    We denote this class of distributions $\mathcal{G}_{1,n}$.
		\item $X$ is an (untranslated) generalized gamma convolutions with Thorin measure $\nu$,
		  denoted $X \sim \mathcal{G}_1(\nu)$,
		  if its cgf writes:
		  $$K(t) = -\int\limits_{0}^{\infty}\ln\left(1-st\right) \,\nu(d s)\text{,}$$ where $\nu$ is such that:
		  $$\int\limits_{[1,+\infty)} |\ln(s)| \,\nu(d s) < \infty \text{ and } \int\limits_{(0,1)} s  \,\nu(d s) < \infty\text{.}$$
    We denote by $\mathcal{G}_{1}$ this class of distributions.
	\end{enumerate}
    In both cases, $K$ is defined on its (complex) domain of convergence. 
\end{definition}

The generalized gamma convolutions class $\mathcal G_1$,
  which is commonly called the one dimensional Thorin class,
  has received a lot of interest in the literature.
These limits of convolutions are not dense in the set of positive continuous random variables,
  but they contain many interesting distributions,
  including the  log-Normal and Pareto distributions as historically important cases~\cite{thorin1977,thorin1977a}.
They have several interesting properties:
  noteworthy is the fact that the $\mathcal G_1$ class is closed with respect to independent sums of random variables,
  by construction,
  but also,
  as a more recent result shows,
  by independent products of random variables~\cite{bondesson2015}.
In fact,
  $\mathcal{G}_1$ can be defined as the smallest class that is closed through convolution and that contains gamma distributions.
See~\cite{bondesson1992} for an extensive study of these distributions\footnote{Note that in~\cite{bondesson1992},
  distributions were parametrized through rates $\beta = \frac{1}{s}$ instead.
We chose here to parametrize by scales,
  as it simplifies some of our notations.}.

Remark that if the Thorin measure $\nu$ of $X \sim \mathcal G_1(\nu)$ has a finite discrete support of cardinal $n$,
  then $X \sim \mathcal{G}_{1,n}(\bm \alpha,\bm s  )$ for $\bm\alpha,\bm s$ 
  vectors such that $\nu = \sum\limits_{i} \alpha_i\delta_{s_i}$
  (where $\delta_x$ is the Dirac measure in $x$).

Mathai~\cite{mathai1982} provides a series expression for the density of $\mathcal{G}_{1,n}$
  random variables based on Kummer's confluent hypergeometric functions.
Later,
  Moschopoulos~\cite{moschopoulos1985} refined the result by providing the following gamma series:

\begin{property}[Moschopoulos expansion]\label{prop:mosho_serie} For $X \sim \mathcal{G}_{1,n}(\bm \alpha,\bm s  )$,
  denoting without loss of generality $s_1 = \min \bm s$,
  the density $f$ of $X$ is given by the following uniformly convergent series:
  $$f(x) = \sum_{k=0}^\infty \delta_k f_{\mathcal G_{1,1}\left(\lvert \bm \alpha \rvert+k,\frac{1}{s_1}\right)}(x),\; \forall x \in \mathbb R_+,$$
  where coefficients $\delta_k$ are given in~\cite{moschopoulos1985}.
\end{property}

The dependence of this expansion onto the smallest scale parameter $s_1$ has a major drawback.
When the smallest scale is close to zero,
  the corresponding truncated series is quite unstable.
A simple show-case is the simulation of random numbers from a 
  $\mathcal{G}_{1,2}\left(\left(10,10^{-3}\right),\left(1,10^{-3}\right)\right)$ distribution (which has, e.g., mean $10.000001$ and variance $10.000000001$):
  the Moschopoulos implementation from the R package \texttt{coga}~\cite{hu2020}
  gives a density that evaluates to $0$ on all random numbers it itself simulated,
  which is obviously wrong.
Mathai's method,
  implemented in the same package,
  produces the same result.
Sadly,
  as later implementation will show,
  parameters that correspond to approximations of useful distributions such as  log-Normal,
  Weibull or Pareto usually have very small scales and trigger the same instability.

A likelihood approach to fit distributions in the Thorin class is therefore not practical, 
  as there is no stable density,
  which is part of the reason for which there are currently no estimation procedures in the literature.

In~\cite{bondesson1992,bondesson2009},
  Bondesson defined a class of multivariate convolutions of gamma distributions based on the following idea from Cherian~\cite{cherian1941}.
Suppose that $Z_0,Z_1$ and $Z_2$ are three independent gamma random variables
  with (respective) shapes $\alpha_i$ and scales $s_i$,
  with w.l.o.g. $s_0 = 1$.
Then the random vector $\left(s_1Z_0 + Z_1,s_2Z_0 + Z_2\right)$ has a meaningful structure of dependence,
  while retaining gamma marginals.
Indeed, its cgf writes:
  $$K(\bm t) = -\alpha_1\ln\left(1-s_1t_1\right) -\alpha_2\ln\left(1-s_2t_2\right) -\alpha_0\ln\left(1-s_1t_1 -s_2t_2\right).$$

This construction can be extended to what Bondesson called multivariate generalized gamma convolutions:

\begin{definition}[$\mathcal{G}_{d,n}$ and $\mathcal{G}_{d}$]\label{def:Gd} 
  Let $\bm X$ be a $d$-variate random vector with support $\mathbb R_+^d$.
	\begin{enumerate}[label=(\roman*)] 
		\item $\bm X$ is a multivariate gamma convolution with shapes $\bm \alpha \in \mathbb R_+^n$ 
      and scale matrix $\bm s = \left(s_{i,j}\right) \in \mathcal{M}_{n,d}\left(\mathbb R_{+}\right)$,
		  denoted by $\bm X \sim \mathcal{G}_{d,n}(\bm \alpha,\bm s  )$ if it has cumulant generating function:
      $$K(\bm t) = \sum\limits_{i=1}^n -\alpha_i \ln\left(1 - \scalarprod{\bm s_i}{\bm t}\right).$$
    We denote by $\mathcal{G}_{d,n}$ this class of distributions.
		\item $\bm X$ is an (untranslated) multivariate generalized gamma convolutions with Thorin measure $\nu$,
		  denoted $\bm X \sim \mathcal G_{d}(\nu)$,
		  if it is a weak limit of $d$-variates gamma convolutions.
		Its cgf writes:
      $$K(\bm t) =  - \int \ln\left(1 - \langle\bm s,\bm t\rangle \right) \nu(d\bm s),$$ for a positive measure $\nu$ of $\mathbb R_{+}^d$,
		  the Thorin measure,
		  with suitable integration conditions (see~\cite{perez-abreu2012}).
		We denote by $\mathcal{G}_{d}$ this class of distributions.
    In both cases, $K$ is defined on its (complex) domain of convergence. 
	\end{enumerate}
\end{definition}

Note that $\bm X \sim \mathcal{G}_{d,n}(\bm \alpha,\bm s  )$
  if and only if there exists a vector $\bm Z$ of $n$ independent unit scale gamma random variables
  with shapes $\alpha_1,...,\alpha_n$ such that $\bm X = \bm s' \bm Z$.
The $j^{\text{th}}$ marginal $X_j$ has distribution $\mathcal{G}_{1,n}(\bm \alpha,\bm s_{.,j})$.
We can interpret this as a decomposition of the random vector $\bm X$ into 
  an additive risk factor model with gamma distributed factors.
Note that we allow scales to be zero,
  and some factors might therefore appear only in some marginals.
As in the univariate case,
  we close the class by taking convolutional limits.
For the analysis of these distributions,
  we refer to~\cite{bondesson1992,bondesson2009},
  but also~\cite{perez-abreu2012} which uses a slightly different but equivalent parametrization,
  allowing a generalization to other cones than $\mathbb R_{+}^{d}$.

Consider a distribution $F$, 
  not necessarily in $\mathcal G_d$,
  that generates our observations.
We are here interested in parametric estimators in $\mathcal G_d$ of this distribution,
  with particular interest for estimators in $\mathcal{G}_{d,n} \subsetneq \mathcal G_d$.
Indeed,
  $\mathcal{G}_{d,n}$ models have a structure that allows fast simulation,
  and that provides meaningful insight about the dependence structure,
  since a $\mathcal{G}_{d,n}$ distribution follows an additive risk factor model.
On the other hand,
  distributions in $\mathcal{G}_d$ with a non-atomic Thorin measure are hard to sample
  (see~\cite{bondesson1982,ridout2009} for potential solutions to this problem).
They have no known closed-form density or distribution function,
  and even the cumulant generating function requires integration to be evaluated.

Before diving into statistical considerations and estimation of these distributions in Section~\ref{sec:estimation},
  we give here some properties about the dependence structure that can be achieved in the $\mathcal{G}_d$ class,
  and about the regularity of the density.

Recall that a real valued random vector $\bm X$ with distribution function $F$ and 
  marginal distribution functions $F_1,...,F_d$ is said to be independent 
  if $F(\bm x) = \prod\limits_{i=1}^d F_i(x_i)$ and comonotonic
  if $F(\bm x) = \min \left(F_1(x_1),...,F_d(x_d)\right)$. 
For more details on dependence structures,
  see~\cite{nelsen2007}, 
  in particular Theorems 2.3.3 (Sklar), 2.5.4 and 2.5.5 (Fréchet-Hoeffding bounds).
Note that $\mathcal G_{d,1}$ random vectors are, by construction, comonotonic.

Between these two extreme cases,
  there are many available structures of dependence for a $\mathcal G_d$ random vector. 
The following properties give more insights about the relation between
  the dependence structure of the random vector
  and the dependence structure of its Thorin measure:

\begin{property}[The support of the Thorin measure]\label{prop:special_thorin_measures} Let $\bm X \sim \mathcal{G}_d(\nu)$,
  and let $Sup(\nu)$ be the support of $\nu$.
Then we have:
	
	\begin{enumerate}[label=(\roman*)]
		\item The marginals of $\bm X$ are mutually independent if and only if
      $$Sup(\nu) \subset S_{\indep} 
        = \BigCup\limits_{j=1}^d \left(\Times_{i=1}^{j-1} \{0\}\right)\times \mathbb R_{+}^{*} \times \left(\Times_{i=j+1}^{d} \{0\}\right).$$
    In this case, 
      if we denote for all $j \in 1,...,d$, 
      $\Omega_j = \{\bm s \in \mathbb R_+^d, \forall k \in \{1,...,d\}\setminus\{j\},\, s_k = 0\}$ the different parts of the support, 
		  $\nu(\bm s) = \sum\limits_{j=1}^{d} \nu_j(s_j) \mathbb{1}_{\bm s \in \Omega_j}$
      where $\nu_j$ denotes the Thorin measure of the $j^{\text{th}}$ marginal,
		  and the total masses of $\nu,\nu_1,...,\nu_d$ are such that
      $\nu(\mathbb R_{+}^d) = \nu(S_{\indep}) = \sum_{i=1}^d \nu_i(\mathbb R_{+})$.
		\item The marginals of $\bm X$ are comonotonic if and only if there exists a constant
      $\bm c\in \mathbb{R}_{+}^d,\lVert \bm c\rVert = 1$ such that
      $$Sup(\nu) \subset S_{\bm c} = \left\{\bm r \in \mathbb R_{+}^d,\; \frac{\bm r}{\lVert \bm r \rVert} = \bm c\right\}.$$
    In this case,
		  the multivariate Thorin measure and its marginals have all the same total mass.
		\item The r.v. $\bm X$ has a Lebesgue-square-integrable density if $D \ge d$,
		  where $D$ denotes the minimum integer such that there exists constants
      $\bm c_1,...,\bm c_D$ such that $Sup(\nu) \subset \BigCup\limits_{i=1}^d S_{\bm c_i}$.
		On the other hand if $D < d$, $\bm X$ is a singular random vector with a $D$-dimensional support
      (which is obviously the case when $\nu$ is atomic with less than $d$ atoms).
	\end{enumerate}
\begin{proof}
  See Appendix~\ref{sec:proofs}.
\end{proof}
\end{property}

The link given at point $(iii)$ of Property~\ref{prop:special_thorin_measures} 
  between the regularity of the density and the constant $D$ (which, for atomic Thorin measures, is the rank of the matrix $\bm s$), 
  will be investigated and quantified further in the next section. 

Between the independent and the comonotonic cases,
  there is a wide range of achievable dependence structures.
For example,
  since the class $\mathcal{G}_d$ is closed w.r.t. convex convolution,
  every (positive) value of dependence measures,
  such as Kendall tau or Spearman rho,
  are achievable.
Some examples in Section~\ref{sec:investigation} also exhibit highly asymmetrical dependence structure and tail dependency.

For the study of the relation between the dependence structure of $\bm X$ and the dependence structure of $\nu$,
  the distribution proposed in Example~\ref{prop:curious_dist} is of interest.

\begin{example}[A curious distribution]\label{prop:curious_dist}
  Let $\nu$ be a one-dimensional measure defined through $\,\nu(d x) = \mathbb 1_{x \in [0,1]} dx$.
	Then $\mathcal{G}_{1}(\nu)$ has cumulant generating function:
	$$K(t) = 1 - \frac{(t-1)}{t}\ln(1-t)$$
	
	Moreover,
	  if for all $n\in \mathbb N,G_{j,n} \sim \mathcal G_{1,1}\left(\frac{1}{n},\frac{j}{n+1}\right)$ are independent,
	  then: $$\sum_{j=1}^{n} G_{j,n} \xrightarrow[n \to \infty]{} \mathcal{G}_{1}(\nu) \quad\text{(in distribution).}$$
    \begin{proof}
      See Appendix~\ref{sec:proofs}.
    \end{proof}
\end{example} 

This random variable has the nice property that its Thorin measure is uniform on [0,1].
% It could be simulated more efficiently than by summing gammas through~\cite{bondesson1982,ridout2009} if we wanted.
However,
  its use is limited by the following negative result:

\begin{remark}[The no-bijection result] Let $\bm X \sim \mathcal{G}_{d}(\nu)$ have marginal cdf $F_1,...,F_d$,
  and denote by $G$ the cdf associated to the distribution of Example \ref{prop:curious_dist}.
	Then,
	  assuming that $\nu$ is absolutely continuous,
	  the random vector 
	  $$\left(G^{-1}\left(F_i(X_i)\right)\right)_{i \in 1,...,d}$$
	  has uniform marginal Thorin measures,
	  but is not $\mathcal{G}_{d}(c_\nu)$-distributed,
	  where $c_{\nu}$ is the copula of $\nu$.
\end{remark}

There is in fact a bijection between the copula of the random vector and the copula of its Thorin measure,
  but conditionally on the marginal distributions.
Therefore,
  an estimation scheme that separates the dependence structure from the marginals seems not possible.

Before discussing the estimation of these distributions,
  we still need to introduce in the next subsection some specific integrals,
  which correspond to derivatives of the moment generating function and the cumulant generating function of a random vector.

\subsection{Shifted moments and shifted cumulants}

We formally introduce the shifted moments and the cumulants of a random vector,
  and present a known and useful result that maps moments to cumulants and vice-versa.

Recall that $\bm X$ is a random vector of dimension $d$,
  with moment generating function $M$ and cumulant generating function $K$ defined by:
  $$\forall \bm t \in \mathbb C^{d},\; M(\bm t) = \mathbb E\left(e^{\langle\bm t,\bm X\rangle}\right) \text{ and } K(\bm t) = \ln M(\bm t)\text{.}$$

We now introduce specific notations for the derivatives of these two functions.

\begin{definition}[Shifted moments and cumulants] For $\bm X$ a random vector of dimension $d$,
	  for $\bm i \in \mathbb N^d$,
	  define the $\bm t$-shifted $\bm i^{\text{th}}$ moments $\mu_{\bm i,\bm t}$ and cumulants $\kappa_{\bm i,\bm t}$ as the $\bm i^{\text{th}}$ derivatives of (respectively) $M$ and $K$,
	  taken at $\bm t$:
    $$\mu_{\bm i,\bm t} = M^{(\bm i)}(\bm t) \text{ and } \kappa_{\bm i,\bm t} = K^{(\bm i)}(\bm t)\text{.}$$
\end{definition}

They correspond to the moment and cumulants of an exponentially tilted version of the random vector $\bm X$,
  sometimes called the Escher transform~\cite{escher1932} of $\bm X$.
Although standard Monte-Carlo estimators for shifted moments are unbiased and easy to compute,
  shifted cumulants are a little harder to estimate from i.i.d. samples of a random vector.
We refer to~\cite{smith2020,smith2020a} for some unbiased estimators of multivariate cumulants,
  and~\cite{krutto2019} for an application to the estimation of stable laws.
To switch from moments to cumulants,
  Property~\ref{bijection_mu_kappa} gives a bijection between the two:

\begin{property}[Bijection between moments and cumulants]\label{bijection_mu_kappa}
	For a given shift $\bm t \in \mathbb C^{d}$ and $\bm m \in \mathbb{N}^d$,
	  the sets $\left(\mu_{\bm i,\bm t}\right)_{\bm i \le \bm m}$ and $\left(\kappa_{\bm i,\bm t}\right)_{\bm i \le \bm m}$
    are in bijection.

	\begin{proof}
    Since $\left(\mu_{\bm i,\bm t}\right)_{\bm i \le \bm m}$ and
    $\left(\kappa_{\bm i,\bm t}\right)_{\bm i \le \bm m}$ are respective Taylor coefficients 
    at $\bm t$ of $\exp \circ K$ and $K$, 
    they are in bijection through the multivariate Faà Di Bruno formula~\cite{constantine1996,mishkov2000,nardo2011}.
	\end{proof}

\end{property}

Remark that this bijection can be expressed in many ways:
  most analysis in the literature uses multivariate Bell Polynomials~\cite{bernardini2005,cvijovic2011,kim2019,melman2018} for this task.
For computational purposes,
  there exists some practical functions that give coefficients of Bell polynomials~\cite{withers1994,hardy2006},
  even for $d>1$.
However,
  in all generality,
  the number of coefficients to compute and store is exponential with $\bm m$ making this method quite unpractical,
  as the Bell polynomials are combinatorial in nature.
To switch from cumulants to moments,
  the good property of the exponential function of being its own derivative allows a recursive approach,
  which is described in full details in~\cite{miatto2019}. 
We adapted the notation and simplified the indexes conventions from~\cite{miatto2019}, 
  which allowed,
  after adding a slack variable to handle the dimensionality index,
  for an even faster implementation.
The expression in Algorithm~\ref{algo:Miatto2019} below uses derivatives of the cgf and mgf as it is the case that we need,
  but the derivative of the exponential of any function could be computed through exactly the same algorithm,
  knowing the derivatives of the exponent.

\begin{algorithm}[H]\label{algo:Miatto2019}
	\SetAlgoLined
	\KwIn{$\bm m \in \mathbb{N}^d$, Shifted Cumulants $\left(\kappa_{\bm k,\bm t}\right)_{\bm k \le \bm m}$}
	\KwResult{Shifted Moments $\left(\mu_{\bm k,\bm t}\right)_{\bm k \le \bm m}$}
	Set $\mu_{\bm 0,\bm t} = \exp\left(\kappa_{\bm 0,\bm t}\right)$\\
	Set $\mu_{\bm k,\bm t} = 0$ for all $\bm k \neq \bm 0$\\
	\ForEach{$\bm k:\;\bm 0 \neq \bm k \le \bm m$}{ 
		Set $d$ as the index of the first $k_i$ that is non-zero.\\
		Set $\bm p = \bm k$\\
		Set $p_d = p_d - 1$\\
		\ForEach{$\bm l:\; \bm l \le \bm p$}{
			$\mu_{\bm k,\bm t} \pluseq \left(\mu_{\bm l,\bm t}\right)  \left(\kappa_{\bm k - \bm l,\bm t}\right)  \binom{\bm p}{\bm l}$
		}
	}
	Return $\bm \mu$
	\caption{Recursive computation of $\bm \mu$ from $\bm \kappa$. (see~\cite{miatto2019})}
\end{algorithm}

Note that the main loop of Algorithm \ref{algo:Miatto2019} must be done in the right order such that each $\mu_{\bm k}$ is computed before being used.
This recursive formulation has stunning results and is at the moment the fastest way this computation can be done%
  \footnote{The complexity is not exponentially increasing in the size of the arrays,
  on the contrary to naive implementations of Faà Di Bruno's formula,
  see~\cite{miatto2019}.}.
Unfortunately,
  no equivalently fast procedure can be found the other way around,
  as the derivatives of the log function are more complicated.

We now dive into the estimation of multivariate gamma convolutions.
We highlight in the next subsection the univariate projection procedure from~\cite{furman2017,miles2019}.

\subsection{Projection from \texorpdfstring{$\mathcal{G}_{1}$}{G1} to \texorpdfstring{$\mathcal{G}_{1,n}$}{G1n}}\label{sec:projection_furman}

Suppose that we have a density in $\mathcal{G}_1$, given as a formal function.
Then,
  for a certain $n \in \mathbb{N}$,
  the procedure provided by Miles, Furman \& Kuznetsov in~\cite{miles2019,furman2017},
  hereafter called “the MFK procedure”,
  gives expression for the shapes $\bm \alpha$ and scales $\bm s$ of a $\mathcal{G}_{1,n}(\bm \alpha,\bm s)$
  that is exponentially close to the desired density.
We will not describe the algorithm here, 
  we refer to the original article for the details as the exposition is quite long,
  but we will comment on the results and caveats and use it as a comparison basis for later experiments.

The MFK procedure solves a univariate moment problem for the Thorin measure. 
This moment problem can be highlighted by deriving the cumulant generating function of a $\mathcal G_1$ random vector.
Indeed,
  remark that, for $X \sim \mathcal G_1(\nu)$ and $t\ge 0$,
  $$\kappa_{k,-t} = \left(k-1\right)!\int \frac{s^k}{\left(1 + st\right)^k}\,\nu(d s),$$ 
  and use the change of variables $x = \frac{s}{1+st}$,
  mapping $\mathbb{R}_{+}$ to $\left[0,\frac{1}{t}\right]$.
You obtain a moment problem of the form:
  $$\xi_k = \int\limits_{\left[0,\frac{1}{t}\right]} x^k\, \xi(d x),\; \forall k \le m,$$
  where $\bm \xi = \left(\xi_k\right)_{k \in \mathbb N} = \left(\frac{\kappa_{k,-t}}{\left(k-1\right)!}\right)_{k \in \mathbb N}$ 
  are moments that the measure $\xi$ needs to fit.

We can then obtain a solution with $n$ atoms for the measures $\xi$ and $\nu$ by solving a truncated version of this moment problem.
In the univariate case, 
  $K^{(1)}$ is a one-dimensional Stieltjes function,
  and therefore this moment problem can be efficiently solved via Padé approximants (see~\cite{derevyagin2007}).

The first drawback of MFK is that 
  the computed moments $\bm \xi$ need to be inside the moment cone of positive measure on $\left[0,\frac{1}{t}\right]$.
For the projection of a formal density already in $\mathcal G_1$ on $\mathcal G_{1,n}$,
  this condition imposes estimations of shifted moments (from which $\bm \xi$ are derived)
  with at least $1024$ bits of precision (about $300$ digits). 
On the other hand,
  if $\bm \xi$ is \emph{not} a sequence of moments of some measure on $\left[0,\frac{1}{t}\right]$,
  then the moment problem becomes unsolvable,
  and 
  MFK fails to provide correct (within boundaries) atoms and weights for the measures $\xi$ and $\nu$.
From an empirical dataset,
  even if the true distribution lies in $\mathcal{G}_1$,
  evaluation of $\bm \mu$ through Monte-Carlo with enough precision is therefore impossible.

Through this moment problem interpretation,
  the approach could nevertheless be carried out in higher dimensions,
  for projections of densities from $\mathcal G_d$ to $\mathcal G_{d,n}$,
  with an even stronger integration precision requirement as the dimension increases.
However,
  the results about the complete factorization of the Padé approximants denominator of a Stieltjes function,
  which drastically simplifies the solution of the moment problem,
  are not true for $d > 1$.
This possibility is nevertheless discussed in the next section.

\section{Construction of an estimator in \texorpdfstring{$\mathcal{G}_{d,n}$}{Gdn}}\label{sec:estimation}

We attempt a multivariate extension of the MFK algorithm.
In a multivariate setting,
  through the moment problem interpretation of MFK algorithm developed in Subsection~\ref{sec:projection_furman},
  we can express the corresponding multivariate moment problem.

For $\bm X \sim \mathcal{G}_{d}\left(\nu\right)$,
  consider $K$ the cumulant generating function of $\bm X$,
  given in Definition \ref{def:Gd} as:
  $$K(\bm t) = - \int \ln\left(1 - \scalarprod{\bm s}{\bm t}\right) \,\nu(d \bm s),$$
where $\bm t$ is in the (complex) domain of convergence of $K$.

Assuming $\lvert \bm i \rvert \ge 1$ and the real parts $\Re\left(\bm t\right)$ of $\bm t$ are all non-positives,
  we have that
  $$\frac{\partial^{\bm i}}{\partial^{\bm i} \bm t} \ln \left(1 - \langle\bm s,\bm t\rangle \right) 
    = - \bm s^{\bm i} \left(\lvert \bm i \rvert -1\right)! \left(1 - \langle\bm s,\bm t\rangle \right)^{- \lvert \bm i \rvert},$$
  and, therefore,
  \begin{align}
  \kappa_{\bm i,\bm t} = K^{(\bm i)}(\bm t) 
  &= - \frac{\partial^{\bm i}}{\partial^{\bm i} \bm t} \int\limits \ln \left(1 - \langle\bm s,\bm t
    \rangle\right) \,\nu(d \bm s) \nonumber\\
  &= \int  \bm s^{\bm i} \left(\lvert \bm i \rvert -1\right)! \left(-1\right)^{\lvert \bm i \rvert-1} \left(1 - \langle\bm s,\bm t
    \rangle\right)^{- \lvert \bm i \rvert} \,\nu(d \bm s) \nonumber\\
  & = \left(\lvert \bm i \rvert -1\right)!\int  \frac{\bm s^{\bm i}}{ \left(1 - \langle\bm s,\bm t
    \rangle\right)^{\lvert \bm i \rvert}} \,\nu(d \bm s).\label{final_eq_kappa}
  \end{align}

Note that $\Re\left(\bm t\right) \le \bm 0$ is inside the region of convergence of the function,
  and use now the (bijective, continuous) change of variables:
  $$\bm x = \frac{\bm s}{1-\langle\bm s,\bm t\rangle} \iff \bm s = \frac{\bm x}{1 + \langle\bm x,\bm t\rangle},$$ 
  which goes from $\mathbb R_{+}^{d}$ to the simplex 
  $\Delta_{d}(-\bm t) = \left\{\bm x \in \mathbb R_{+}^d, \; \,\langle\bm x,-\bm t\rangle \le 1\right\}$,
  yielding an equivalent problem of finding the measure $\xi$ that solves the more standard moment problem:
  \begin{equation}\label{multivariate_moment_problem}
  \forall\,\bm i \le \bm m,\; \xi_{\bm i} = \int\limits_{\Delta_{d}(-\bm t)} \,\bm x^{\bm i}\xi\left(d\bm x\right),
  \end{equation}
  where we denoted 
  $\bm \xi = \left(\frac{\kappa_{\bm i,\bm t}}{(\lvert \bm i \rvert -1)!}\right)_{\bm i \le \bm m}$
  the moments that $\xi$ needs to match according to Equation~\eqref{final_eq_kappa}.

Finding an $n$-atomic measure $\xi$ solution to these equations is equivalent to finding the parametrization
  of a $\mathcal{G}_{d,n}$ distribution that fulfills the cumulant constraints.
The corresponding Generalized Moment Problem is known as a hard problem in the literature,
  but can be solved through semi-definite positive relaxations,
  following, e.g.,
 \cite{helton2012,nie2012,henrion2012a}.
Sadly,
  these algorithms and the corresponding literature focus on the first case,
  where $\bm \xi$ are,
  indeed,
  moments of a measure supported on the simplex $\Delta_{d}(-\bm t)$,
  which is the case only if $\bm X \in \mathcal{G}_{d,n}$ and if $\bm \xi$ are computed with enough precision,
  as it was in univariate settings.

If the knowledge about the distribution of $\bm X$ is given through empirical observations,
  with sampling noise,
  or if the true distribution is $\emph{not}$ in the $\mathcal{G}_{d,n}$ class,
  these moments will not belong to the moment cone,
  and the moment problem will have no solution.
By projecting the moments onto the moment cone,
  we could force the known algorithms to provide a solution,
  but the projection onto this cone is not an easy task (see~\cite{didio2018,didio2018a} for details on moment cones).

The estimation of an $n$-atomic Thorin measure from samples of the random vector $\bm X$ therefore rises several questions,
  both in univariate and multivariate cases.
If we empirically compute cumulants,
  we will stand outside the moment cone and the moment problem will have no solution.
Then,
  should we consider Equation~\eqref{multivariate_moment_problem} as a multi-objective optimization problem ?
Do we seek a Pareto front,
  or can we consider that some moments are less important than others ?
If we do,
  how do we weight the several objectives ?

To produce a meaningful loss to minimize, 
  we will weight these moment conditions.
For this purpose,
  we leverage a certain Laguerre basis of $L_2(\mathbb{R}_+^d)$ to expand the density of random vectors in $\mathcal{G}_{d,n}$,
  and to construct a coherent loss for this,
  potentially impossible to solve,
  moment problem.

\subsection{The tensorized Laguerre basis} 

The standard Laguerre polynomials~\cite{mustapha2010} form an orthogonal basis of the set $L_2(\mathbb{R}_+,e^{-x})$
  of square integrable functions with respect to the weight function $x \mapsto e^{-x}$.
From these polynomials,
  we can extract the following orthonormal basis of the set $L_2(\mathbb{R}_+^d)$ 
  of functions that are Lebesgue square-integrable on $\mathbb R_+^d$:
\begin{definition}[Laguerre function]\cite{comte2015,mabon2017,dussap2020}\label{def:laguerre_basis}
	For all $\bm k \in \mathbb{N}^d$,
  define the Laguerre functions $\phi_{\bm k}$ with support $\mathbb{R}_+^d$ by:
  $$\phi_{\bm k}(\bm x) = \prod\limits_{i=1}^d \phi_{k_i}(x_i) 
    \text{ where for } k \in \mathbb{N},\;
    \phi_k(x) = \sqrt{2}e^{-x}\sum\limits_{\ell=0}^k \binom{k}{\ell} \frac{(-2x)^\ell}{\ell!}.$$
\end{definition}

For every function $f \in L^2(\mathbb{R}_+^d)$,
  for every $\bm k \in \mathbb N^d$,
  coefficients of $f$ in the basis are denoted by:
  $$a_{\bm k}\left(f\right) = \int\limits_{\mathbb R_+^d} f(\bm x) \phi_{\bm k}(\bm x) \,d\bm x,$$
  and we have,
  since the basis is orthonormal,
  that $f(\bm x) = \sum\limits_{\bm k \in \mathbb N^d} a_{\bm k}(f) \phi_{\bm k}(\bm x)$ 
  and that $\lVert f\rVert_{2}^2 = \sum\limits_{\bm k \in \mathbb{N}^d}a_{\bm k}^2(f)$.

Furthermore,
  for $\bm X$ a random vector with density $f \in L_{2}\left(\mathbb R_+^d\right)$ and shifted moments $\bm \mu$,
  we have by simple plug-in of $f$ and $\phi_{\bm k}$ into the expression of $a_{\bm k}$ that:
  \begin{equation}\label{eq:laguerre_coef_expression}
	a_{\bm k}(f) = \sqrt{2}^d \sum\limits_{\bm \ell \le \bm k} \binom{\bm k}{\bm \ell} \frac{(-2)^{\lvert \bm \ell \rvert}}{\bm \ell !} \mu_{\bm \ell,-\bm 1}.
	\end{equation}

We now denote $a_{\bm k}(\bm \alpha,\bm s)$ the $\bm k^{\text{th}}$ Laguerre coefficient of the $\mathcal G_{d,n}(\bm \alpha,\bm s)$ distribution.

\begin{example}[$\bm X \sim \mathcal{G}_{d,1}(\alpha,\bm s)$]\label{ex:d1-inversion} For $\bm X \sim \mathcal G_{d,1}(\alpha,\bm s)$,
  there exists an explicit bijection between the $d+1$ first Laguerre coefficients (with index $\bm k$ s.t. $\lvert\bm k\rvert \le 1$) and the $d+1$ parameters,
  whose expression is given in the proof.
    \begin{proof}
      See Appendix~\ref{sec:proofs}.
    \end{proof}
\end{example}

This example is quite peculiar since by Property \ref{prop:special_thorin_measures} the random vector is singular.
Note that the edge case $\exists i \in 1,..,d,\;r_i =0$,
  in which the $i^{\text{th}}$ marginal is identically zero,
  is included in the previous result.

We do not know if the same kind of inversion could be carried out analytically for the coefficients of $\mathcal{G}_{d,n}$ densities,
  since Laguerre coefficients are not as trivially expressed in all generality.
Furthermore,
  this kind of parametric inversion would not work for empirical datasets or densities outside the class,
  and is therefore not our focus here.
However,
  we can efficiently compute coefficients from parameters of a $\mathcal{G}_{d,n}$ random vector through Algorithm~\ref{algo:DensityOMGC},
  based on Algorithm~\ref{algo:Miatto2019}:

\begin{algorithm}[H]\label{algo:DensityOMGC}
	\SetAlgoLined
	\KwIn{Shapes $\bm \alpha \in \mathbb R_{+}^n$, scales $\bm s \in \mathcal{M}_{n,d}\left(\mathbb R_{+}\right)$, and truncation threshold $\bm m \in \mathbb N^d$}
	\KwResult{Laguerre coefficients $\bm a = \left(a_{\bm k}\right)_{\bm k \le \bm m}$of the $\mathcal G_{d,n}\left(\bm \alpha,\bm s  \right)$ density}
  \KwSideProd{$-\bm 1$-shifted cumulants $\bm \kappa$ and moments $\bm \mu$ op to order $\bm m$ of the $\mathcal G_{d,n}(\bm \alpha, \bm s)$ distribution.}
	Compute the simplex version of the scales $\bm x_i = \frac{\bm s_i}{1 + \lvert \bm s_i \rvert}$ for all $i \in 1,...,n$.\\
	Let $\kappa_{\bm 0} = \sum\limits_{i=1}^{n} \alpha_i \ln\left(1 - \lvert \bm x_i \rvert\right)$\\
	Let $a_{\bm 0} = \mu_{\bm 0} = \exp\left(\kappa_{\bm 0}\right)$\\
	\ForEach{$\bm 0 \neq \bm k \le \bm m$}{ 
		Let $a_{\bm k} = \mu_{\bm k} = 0$\\
		Let $d$ be the index of the first $k_i$ that is non-zero.\\
		Let $\bm p = \bm k$\\
		Set $p_d = p_d - 1$\\
		Let $\kappa_{\bm k} = \left(\lvert \bm k \rvert - 1\right)!\; \sum\limits_{i=1}^n \alpha_i \bm x_i^{\bm k}$ according to Eq.~\eqref{final_eq_kappa}\\
		\ForEach{$\bm l \le \bm p$}{
			Set $\mu_{\bm k} \pluseq \left(\mu_{\bm l}\right)  \left(\kappa_{\bm k - \bm l}\right)  \binom{\bm p}{\bm l}$ according to Algorithm \ref{algo:Miatto2019}\\
			Set $a_{\bm k} \pluseq \mu_{\bm l} \binom{\bm k}{\bm l} \frac{(-2)^{\lvert \bm l \rvert}}{\bm l!}$ according to Eq.~\eqref{eq:laguerre_coef_expression}\\
		}
		Set $a_{\bm k} \pluseq \mu_{\bm k} \frac{(-2)^{\lvert \bm k \rvert}}{\bm k!}$
	}
	$\bm a = \sqrt{2}^d \bm a $\\
	Return $\bm a$
	\caption{Laguerre coefficients of $\mathcal{G}_{d,n}(\bm \alpha,\bm s)$ distributions.}
\end{algorithm}

The complexity of Algorithm~\ref{algo:DensityOMGC} is quadratic in $\lvert\bm m\rvert$.
Note that,
  as in Algorithm~\ref{algo:Miatto2019},
  computations need to be performed in the right order.
Sometimes, 
  the Laguerre coefficients $\bm a = \left(a_{\bm k}\right)_{\bm k \le \bm m}$ overflow the Float64 limits,
  but the implementation we provide in the Julia package \texttt{ThorinDistributions.jl}~\citep{ThorinDistributions},
  ensures that the computations do not overflow by using multiple precision arithmetic when needed.
Furthermore,
  as in Algorithm~\ref{algo:Miatto2019},
  the use of Miatto's fast recursion gives this algorithm a good efficiency,
  even for reasonably large $d,n,\bm m$.

By reordering the coefficients and leveraging generalizations of Laguerre polynomials,
  we have the following link with Moschopoulos's density:

\begin{remark}[Generalized Laguerre basis and Moschopoulos density] 
  Note that Laguerre expanded densities are expressed as (infinite) gamma mixtures, 
    as was Moschopoulos's density in the univariate case (see Property~\ref{prop:mosho_serie}).
  Furthermore, 
    following~\cite{comte2015},
    the Laguerre basis is defined through univariate orthogonal polynomials w.r.t the weight function $e^{-x}$, i.e a $\mathcal G_{1,1}(1,1)$ density.
  If, 
    instead, 
    we use the weight function $x^\rho e^{-x}, \rho \ge 0$, corresponding to a $\mathcal G_{1,1}(1+\rho,1)$ density,
    the associated orthogonal polynomials are the so-called generalized Laguerre polynomials,
    and a slightly different associated orthonormal basis of $L_2(\mathbb R_+^d)$ is obtained.
	In the one-dimensional case,
	  if we chose $\rho$ to be the total mass of the Thorin measure,
	  we retrieve Moschopoulos series from Property~\ref{prop:mosho_serie} as a Laguerre expansion.
\end{remark}

Sadly,
  even if mixtures of gammas are easier to fit by $k$-MLE procedure~\cite{nielsen2012,schwander2013},
  we have no way of identifying the subspace of coefficients that would match a true convolution of gammas:
  as~\cite{bondesson1992} noted in the univariate case,
  generalized gamma convolutions can be expressed as mixtures of gammas,
  but there is no simple reverse mapping.
Last but not least,
  if the expansion through generalized Laguerre polynomials with a parameter corresponding
  to the total mass of the Thorin measure would converge faster,
  we would have no way of estimating this parameter beforehand from data,
  and for many useful cases it would be infinite (log-Normals among others).

We can now compute efficiently these Laguerre coefficients for $\mathcal G_{d,n}$ distributions.
In the next subsection, we provide a control on their decay.

\subsection{Exponential decay of Laguerre coefficients}

We now consider the convergence of these Laguerre expansions. 
Under simple technical assumptions on the parameters of a $\mathcal{G}_{d,n}$ random vector,
  Laguerre coefficients are (uniformly) exponentially decreasing.
Theorem~\ref{prop:bounds} provides this bound:

\begin{theorem}[Exponential decay of Laguerre coefficients]\label{prop:bounds} 
  For any $\epsilon, \epsilon'$ s.t. $\epsilon > \epsilon' > 0$ and any dimension $d$,
    there exists a finite positive constant $B(d,\epsilon')$ such that 
    the Laguerre coefficients $\left(a_{\bm k}\right)_{\bm k \in \mathbb N^d}$ 
    of any $d$-variate $\epsilon$-well-behaved gamma convolution satisfy: 
    $$\lvert a_{\bm k}\rvert  \le B(d,\epsilon')(1+\epsilon')^{-\lvert k \rvert}.$$
\end{theorem}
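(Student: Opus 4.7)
The strategy is to view the Laguerre coefficients $a_{\bm k}$ as Taylor coefficients at the origin of a multivariate holomorphic generating function, and then apply a multivariate Cauchy estimate on a polydisc of polyradius $1+\epsilon'$.

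First, I would build the generating function. Starting from the classical Laguerre identity $\sum_{k\ge 0} L_k(y)\, z^k = (1-z)^{-1}e^{-yz/(1-z)}$, valid for $\lvert z\rvert < 1$, and tensorising in the $d$ coordinates one obtains, for $\bm z$ near $\bm 0$,
$$\sum_{\bm k \in \mathbb N^d} \phi_{\bm k}(\bm x)\,\bm z^{\bm k} \;=\; 2^{d/2}\prod_{j=1}^d \frac{1}{1-z_j}\,\exp\!\left(-\sum_{j=1}^d x_j\,\frac{1+z_j}{1-z_j}\right).$$
Writing $u_j(z_j) = (1+z_j)/(1-z_j)$, integrating against the density $f$ of $\bm X$, and exchanging sum and integral (justified by absolute convergence for $\bm z$ in a neighbourhood of $\bm 0$), I get
$$G(\bm z) \;:=\; \sum_{\bm k \in \mathbb N^d} a_{\bm k}\,\bm z^{\bm k} \;=\; 2^{d/2}\prod_{j=1}^d \frac{1}{1-z_j}\; M_{\bm X}\!\bigl(-\bm u(\bm z)\bigr).$$

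Second, I would specialise to $\bm X \sim \mathcal G_{d,n}(\bm\alpha,\bm s)$. Using $M_{\bm X}(\bm t)=\prod_{i=1}^n(1-\langle\bm s_i,\bm t\rangle)^{-\alpha_i}$ and clearing the common denominator $\prod_j(1-z_j)$ inside each factor gives
$$1 + \langle\bm s_i,\bm u(\bm z)\rangle \;=\; \frac{Q_i(\bm z)}{\prod_j(1-z_j)},\qquad Q_i(\bm z) \;:=\; \prod_j(1-z_j) \,+\, \sum_{j=1}^d s_{i,j}(1+z_j)\!\!\prod_{k\neq j}(1-z_k),$$
a multilinear polynomial with $Q_i(\bm 0)=1+\lvert\bm s_i\rvert>0$. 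Substituting back produces the compact closed form
$$G(\bm z)\;=\;2^{d/2}\,\prod_{j=1}^d(1-z_j)^{\lvert\bm\alpha\rvert-1}\,\prod_{i=1}^n Q_i(\bm z)^{-\alpha_i},$$
so that the singularities of $G$ are entirely captured by the zero sets of the $Q_i$ (together with a mild branch at $\bm z = \bm 1$ when $\lvert\bm\alpha\rvert<1$, which is integrable and harmless on a generic polytorus).

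Third, and this is the crux, I would invoke the $\epsilon$-well-behaved hypothesis exactly to guarantee that, for every $\epsilon'\in(0,\epsilon)$, each polynomial $Q_i$ is zero-free and uniformly bounded below by some $\delta(d,\epsilon')>0$ on the closed polydisc $\{\lvert z_j\rvert\le 1+\epsilon'\}$, uniformly in the parameters $(n,\bm\alpha,\bm s)$. Combined with the trivial upper bound $\lvert 1-z_j\rvert\le 2+\epsilon'$ on that polydisc, this yields a uniform sup-norm bound $\lvert G(\bm z)\rvert \le B(d,\epsilon')$ on the distinguished boundary $\{\lvert z_j\rvert=1+\epsilon'\}$. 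As a sanity check, in the univariate case $n=1$ one has $Q_1(z)=(1-z)+s(1+z)$, whose unique zero is $z=(1+s)/(1-s)$: it lies outside $\lvert z\rvert\le 1+\epsilon'$ precisely when $s\ge \epsilon'/(2+\epsilon')$, and the well-behavedness condition is the right multivariate abstraction of this geometric constraint.

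Finally, since $G$ is holomorphic on the open polydisc of polyradius $1+\epsilon'$ and bounded on its distinguished boundary, the multivariate Cauchy integral formula
$$a_{\bm k} \;=\; \frac{1}{(2\pi i)^d}\oint G(\bm z)\,\prod_{j=1}^d \frac{dz_j}{z_j^{k_j+1}}$$
integrated on $\{\lvert z_j\rvert=1+\epsilon'\}$ directly yields $\lvert a_{\bm k}\rvert \le B(d,\epsilon')\,(1+\epsilon')^{-\lvert\bm k\rvert}$. The main obstacle is therefore step three: verifying that $\epsilon$-well-behavedness really entails uniform nonvanishing of all the $Q_i$ on the enlarged polydisc is the geometric content of the theorem, and is precisely where a quantitative value of $B(d,\epsilon')$ must be extracted.
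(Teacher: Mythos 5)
Your proposal follows essentially the same route as the paper's proof: both identify $\sum_{\bm k}a_{\bm k}\bm z^{\bm k}$ with $\sqrt{2}^{\,d}\prod_j(1-z_j)^{-1}M(\bm h(\bm z))$ (your $-\bm u$ is exactly the paper's Möbius map $\bm h$), locate its singularities on the pullbacks of the hyperplanes $\langle\bm s_i,\bm t\rangle=1$ --- your zero sets of the $Q_i$ --- invoke the $\epsilon$-well-behaved hypothesis to push them outside the polydisc of polyradius $1+\epsilon$, and conclude with Cauchy's inequality on the polyradius-$(1+\epsilon')$ polydisc. The only differences are cosmetic: you derive the generating function from the classical Laguerre generating identity rather than from the moment generating function of the Laguerre-expanded density, and you write the singular locus explicitly as the zero sets of multilinear polynomials instead of leaving it as the solution set of the linear systems $\bm s_I\bm t=\bm 1$.
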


Of course, we did not define yet the concept of $\epsilon$-well-behaved gamma convolution, which will be central in the proof. 
Before exposing the main proof, 
  we start by a few technical statements.
Lemma~\ref{lem:mobius} gathers some useful properties about a particular univariate Möbius transform.
\begin{lemma}[Property of a Möbius transform]\label{lem:mobius}
  Let $h$ be the Möbius transform: 
    $$h(t) = \frac{t+1}{t-1}.$$
  For $a \in \mathbb C$ and $b \in \mathbb R_+$, we denote the disc with center $a$ and radius $b$ by:
  $$D(a,b) = \left\{t \in \mathbb C, \lvert t-a \rvert < b\right\}.$$
  Also denote $c(t) = \frac{t^2+1}{t^2-1}$ and $r(t) = \left\lvert \frac{2t}{t^2-1}\right\rvert$,
    and finally denote $\mathbb C_-$ and $\mathbb C_+$ the left half and right half of the complex plane. 
  The following holds: 
  \begin{enumerate}[label=(\roman*)]
    \item $h$ is its own inverse, 
      i.e., $h(h(t)) =t$,
      and $h$ has a simple pole at $t=1$ which has limit $+\infty$ if $Re(t)>1$ and $-\infty$ if $Re(t)<1$, $t \rightarrow 1$.
    \item $h(1/t) = -h(t)$ for all $t \in \mathbb C$ and $\lvert h(t) \rvert > 1$ for all $Re(t)>0, Im(t)<+\infty$
    \item $h(D(0,1)) = \mathbb C_-$
    \item If $b<1$, $h(D(0,b)) = D(c(b),r(b)) \subset \mathbb C_-$
    \item If $b>1$, $\mathbb C \setminus h(D(0,b)) = D(c(b),r(b)) \subset \mathbb C_+$
  \end{enumerate}
  \begin{proof}
    See Appendix~\ref{sec:proofs}.
  \end{proof}
\end{lemma}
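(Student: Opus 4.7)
The plan is to dispatch (i) and (ii) by direct algebra, and then prove (iii)--(v) by the standard Möbius-transform fact that $h$ carries generalized circles to generalized circles and is conformal; once the image of the boundary circle $\partial D(0,b)$ is identified, a single interior test point decides whether $h(D(0,b))$ is the bounded or the unbounded component.

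First I would handle (i) by substituting $h(t)$ into itself and simplifying to obtain $h(h(t)) = t$, and record that the only singularity comes from the denominator $t-1$ vanishing linearly. For the one-sided limits as $t \to 1$ along the real axis I would observe that $h(t) = 1 + \tfrac{2}{t-1}$, so the sign of $t-1$ controls the sign at infinity. For (ii), the identity $h(1/t) = -h(t)$ follows by multiplying numerator and denominator of $h(1/t)$ by $t$; the bound $\lvert h(t) \rvert > 1$ when $\Re(t)>0$ (and $t$ finite) reduces, after writing $t = x+iy$, to the clean identity
\begin{equation*}
\lvert h(t) \rvert^{2} - 1 \;=\; \frac{\lvert t+1 \rvert^{2} - \lvert t-1 \rvert^{2}}{\lvert t-1 \rvert^{2}} \;=\; \frac{4x}{(x-1)^{2}+y^{2}},
\end{equation*}
which is strictly positive precisely when $x>0$.

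For (iii), because $h$ is a Möbius transform it maps the unit circle $\partial D(0,1)$ to a generalized circle; I would compute $h(1)=\infty$, $h(-1)=0$, $h(i)=-i$ (and $h(-i)=i$ by conjugation), concluding that the image is the imaginary axis. To decide which half-plane is the image of $D(0,1)$, the test point $h(0)=-1 \in \mathbb C_{-}$ settles it. For (iv) and (v), the same strategy applies to $\partial D(0,b)$: the real points $b$ and $-b$ are diametrically opposite on $\partial D(0,b)$ and, because $h$ sends generalized circles to generalized circles, their images $h(b)$ and $h(-b)$ are diametrically opposite on the image circle. A short computation of $\tfrac{1}{2}(h(b)+h(-b))$ and $\tfrac{1}{2}\lvert h(b)-h(-b) \rvert$ yields exactly $c(b)$ and $r(b)$. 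The decision between bounded and unbounded component is made by asking whether the pole $t=1$ lies in $D(0,b)$: for $b<1$, the pole is outside, so $h$ restricted to $\overline{D(0,b)}$ is continuous and $D(0,b)$ maps to the bounded disc $D(c(b),r(b))$; for $b>1$, the pole is inside, so $D(0,b)$ maps to the unbounded component, i.e.\ $\mathbb C \setminus D(c(b),r(b))$.

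The remaining step is the inclusion in $\mathbb C_{-}$ or $\mathbb C_{+}$, which will be the most bookkeeping-heavy part of the proof but is not conceptually hard. I would check the extreme real point of the image disc in each case: for $b<1$, the rightmost point is $c(b)+r(b) = -(1-b)/(1+b) < 0$, so $D(c(b),r(b)) \subset \mathbb C_{-}$; for $b>1$, the leftmost point is $c(b)-r(b) = (b-1)/(b+1) > 0$, so $D(c(b),r(b)) \subset \mathbb C_{+}$. The main potential pitfall is just keeping track of signs of $b^{2}-1$ inside $c(b)$ and $r(b)$ across the two regimes, and making sure the orientation argument (bounded versus unbounded image) is made rigorous by invoking continuity of $h$ on the complement of its pole rather than an informal appeal.
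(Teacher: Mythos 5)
Your proposal is correct and follows essentially the same route as the paper: (i)--(ii) by direct algebra, and (iii)--(v) via the generalized-circle property of M\"obius maps, with $c(b)$ and $r(b)$ read off from $h(b)$ and $h(-b)$ and a single test point deciding which component of the complement of the image circle is the image of the disc. The one spot to tighten is your claim that $h(b)$ and $h(-b)$ are diametrically opposite on the image circle ``because $h$ sends generalized circles to generalized circles'': that property alone does not give it; what you need is that $h$ has real coefficients, so the image circle is symmetric about the real axis and meets it exactly at $h(\pm b)$ --- which is precisely the ``$h$ maps reals to reals'' observation the paper invokes.
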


We are now in position to define the concept of $\epsilon$-well-behaved gamma convolutions,
  which is a specific regularity assumption on a parametrization of 
  generalized gamma convolutions.
Unfortunately, it is a little cumbersome to describe as Definition~\ref{def:eps_wb} shows.

\begin{definition}[$\epsilon$-well-behaved gamma convolutions]\label{def:eps_wb} 
  A gamma convolution $\mathcal G_{d,n}(\bm \alpha,\bm s)$ is said to be $\epsilon$-well-behaved,
    in short $\epsilon$-w.b.,
    if $\lvert\bm\alpha\rvert>1$ and $\forall I \subseteq \{1,...,n\}$ such that $\sum_{i \in I}\alpha_i > \sum_{i \notin I} \alpha_i$,
    denoting $\bm s_I = \left(\bm s_i\right)_{i \in I}$, 
    for any (complex) solution $\bm t^*$ of the linear system of equations $\bm s_I \bm t = \bm 1$,
    at least for one $j \in 1,...,d$,
    $\lvert h(t_j^*)\rvert > 1+\epsilon$.

  An equivalent statement when $d=1$ is simply that $\lvert\alpha\rvert > 1$ and 
    $\bm s \in ]\frac{\epsilon}{2+\epsilon}, \frac{2+\epsilon}{\epsilon}[^n$.

  If there exists $\epsilon>0$ such that the model is $\epsilon$-w.b.,
    we say that the model is well-behaved, in short w.b.
\end{definition}

Of course, 
  any $\epsilon_1$-w.b. gamma convolution is $\epsilon_2$-w.b. for any $0 < \epsilon_2 < \epsilon_1$.
Property~\ref{prop:epsilon_wb_2} gives equivalent statements for the well-behavior of a gamma convolution,
  hopefully clarifying the requirements from Definition~\ref{def:eps_wb}.

\begin{property}[Well-behaved gamma convolution]\label{prop:epsilon_wb_2}
  For a gamma convolution $\mathcal G_{d,n}(\bm \alpha, \bm s)$ with Thorin measure $\nu$,
    the following statements are equivalent: 
  \begin{enumerate}[label=(\roman*)]
    \item $\mathcal G_{d,n}(\bm \alpha, \bm s)$ is well-behaved.
    \item $\exists\, \epsilon>0$ such that $\mathcal G_{d,n}(\bm \alpha, \bm s)$ is $\epsilon$-well-behaved.
    \item $\lvert\bm\alpha\rvert>1$ and $\forall I \subseteq \{1,\ldots,n\}$ such that 
          $\sum_{i \in I}\alpha_i > \sum_{i \notin I} \alpha_i$, $Ker(\bm s_I) = \{\bm 0\}$.
    \item $\nu(\mathbb R_+^d)>1$ and there exists no $\bm c_1, \ldots,\bm c_p$ linearly independent, $p < d$, such that 
    $$\nu\left(\BigCup\limits_{i=1}^{p} S_{\bm c_i}\right) > \frac{\nu(\mathbb R_+^d)}{2},$$
    where $S_{\bm c}$ is as defined in Property~\ref{prop:special_thorin_measures}.
  \end{enumerate}
  \begin{proof}
    First, 
      $(i) \iff (ii)$ is just the definition of well-behavior.
    Furthermore, 
      $(iv)$ is equivalent to $(iii)$:
      a subset of scales $\bm s_I$ such that $\sum_{i \in I}\alpha_i > \sum_{i \notin I} \alpha_i$ 
      is a subset of scale that has the majority of the weight of the Thorin measure assigned to it,
      and $Ker(\bm s_I) = 0$ if and only if at least $d$ linearly independent directions are inside $\bm s_I$.  
    Therefore, we only show $(ii) \iff (iii)$.

    By Rouché-Capelli's Theorem, see~\cite{suetin1989},
      any complex solution (if there are some) of the linear system 
      $\bm s_I \bm t = \bm 1$ is unique if and only if $Ker(\bm s_I)=\{\bm 0\}$.
    If it exists, call this solution $\bm t^* \in \mathbb C^d$.
    
    Now,
      since for a given $i \in I$, 
      $s_{i,j}$ are all non-negative reals and one of them must not be zero, 
      $\bm t^*$ must have at least one component $t_j^*$ that has a positive real part.
    Furthermore, this component is clearly of bounded modulus since the solution is unique and all $s_{i,j}$ are finite.
    
    Finally,
      the requirement  $\lvert h(t_j^*)\rvert > 1+\epsilon$ is equivalent,
      through Lemma~\ref{lem:mobius}, point $(v)$,
      to $\lvert t_j^* - c(1+\epsilon) \rvert < r(1+\epsilon)$ (where $c$ and $r$ were defined in Lemma~\ref{lem:mobius}).
    
      Since the disk $D(c(1+\epsilon),r(1+\epsilon))$ is in $\mathbb C_+$,
        and tends to the whole $\mathbb C_+$ as $\epsilon$ goes to $0$ by Lemma~\ref{lem:mobius}, point $(iv)$,
        there always exist an $\epsilon >0$ so that a bounded $t_j^*$ with positive real part is inside this disk. 
  \end{proof}
\end{property}

The proof of Property~\ref{prop:epsilon_wb_2} gives a generic way of finding 
  the greatest $\epsilon$ that makes a well-behaved gamma convolution
  $\epsilon$-w.b. for the general multivariate case. 
In a certain sense, the $\epsilon$-w.b. property is a quantification of the statements from Property~\ref{prop:special_thorin_measures}:
  the Thorin measure must not assign half of its mass or more to any subset that would correspond to a non absolutely continuous gamma convolution.
It is therefore stronger than the absolute continuity of the density.
A surprise comes in the next examples, 
  where we show the unexpected fact that the w.b. subclass of gamma convolutions is closed w.r.t finite convolutions.

\begin{example}[Simple w.b. examples]\label{ex:wb} As soon as the total mass of their Thorin measure is greater than one,
  the following gamma convolutions are well-behaved:
  \begin{enumerate}[label=(\roman*)]
    \item Any univariate gamma convolution (the best $\epsilon$ is given in Definition~\ref{def:eps_wb}).
    \item Any independent random vector in $\mathcal G_{d,n}$.
    \item Any invertible linear transformation of a well-behaved (multivariate) gamma convolution
    \item Any finite convolution of well-behaved (multivariate) gamma convolutions.
  \end{enumerate}
  \begin{proof}
    See Appendix~\ref{sec:proofs}.
  \end{proof}
\end{example}

We are now ready to prove the statement of Theorem~\ref{prop:bounds}.

\begin{proof}[Proof of Theorem~\ref{prop:bounds}]
  The theorem stated that, for any $\epsilon, \epsilon'$ s.t. $\epsilon > \epsilon' > 0$ and any dimension $d$,
  there exists a finite positive constant $B(d,\epsilon')$ such that 
  the Laguerre coefficients $\left(a_{\bm k}\right)_{\bm k \in \mathbb N^d}$ of any $d$-variate $\epsilon$-well-behaved gamma convolution verify: 
  $$\lvert a_{\bm k}\rvert  \le B(d,\epsilon')(1+\epsilon')^{-\lvert k \rvert}.$$

  The sketch of the proof is as follows: 
    we construct a multivariate complex function $R$ that has the Laguerre coefficients as its Taylor coefficients,
    and we show by a singularity analysis that this function is analytic around the origin,
    in a polydisc with polyradius $1+\epsilon$.
  We conclude by applying Cauchy's inequality. 
  As the one-dimensional case allows for more detailed computations,
    we start by working for all $d$,
    and split later into the two cases $d=1$ and $d>1$. 
  
  We follow the path of~\cite{kabardov2009} to express the Laguerre coefficients 
    $\left(a_{\bm k}\right)_{\bm k \in \mathbb{N}^d}$ 
    as the Taylor coefficients of a certain ($d$-variate, complex) function $R$.
  From the computation of the moment generating function $M(\bm t)$ of the Laguerre expanded density
    $f(\bm x) = \sum\limits_{\bm k \in \mathbb{N}^d} a_{\bm k} \phi_{\bm k}(\bm x)$,
    we have:
  \begin{align*}
    M(\bm t) = \int\limits_{\mathbb{R}_+^d} e^{\langle \bm t,\bm x \rangle} f(\bm x) \,d \bm x 
    &= \sqrt{2}^{d} \sum\limits_{\bm k \in \mathbb{N}^d} a_{\bm k} \prod\limits_{i=1}^d 
      \sum\limits_{j=0}^{k_i} \binom{k_i}{j} \frac{(-2)^j}{j!} \int\limits_{\mathbb{R}_+} e^{(t_i-1)x_i} x_i^{j}\,d x_i\\
    &= \sqrt{2}^{d} \sum\limits_{\bm k \in \mathbb{N}^d} a_{\bm k} \prod\limits_{i=1}^d 
      \sum\limits_{j=0}^{k_i} \binom{k_i}{j} \frac{(-2)^j}{(1-t_i)^{j+1}}\\
    &= \sqrt{2}^{d} (1 - \bm t)^{-\bm 1}\sum\limits_{\bm k \in \mathbb{N}^d} a_{\bm k} \left(1 - \frac{2}{1 - \bm t}\right)^{\bm k}\\
    &= \sqrt{2}^{d} (1 - \bm t)^{-\bm 1}\sum\limits_{\bm k \in \mathbb{N}^d} a_{\bm k} \left(\frac{\bm t +1}{\bm t -1}\right)^{\bm k},
  \end{align*} 
    which implies that: $$\sum\limits_{\bm k \in \mathbb{N}^d} a_{\bm k} \left(\frac{\bm t +1}{\bm t -1}\right)^{\bm k} 
                           = \sqrt{2}^{-d} (1 - \bm t)^{\bm 1} M(\bm t).$$

  Now denote $\bm h(\bm t) = \left(h(t)\right)_{t \in \bm t}$, 
    so that $\bm h$ applies the Möbius transform $h(t) = \frac{t+1}{t-1}$ from Lemma~\ref{lem:mobius} componentwise. 
  Using the substitutions $\bm y = \bm h(\bm t) \iff \bm t = \bm h(\bm y)$ (since $h$ is its own inverse), 
    which implies that $\left(1-\bm t\right)^{\bm 1} = 2^{d} \left(1-\bm y\right)^{-\bm 1}$,
    we define the function $R$ as:
    \begin{align*}
      R(\bm y) :=\sum\limits_{\bm k \in \mathbb N^d} a_{\bm k} \bm y^{\bm k} 
                &=\sqrt{2}^{-d} M\left(\bm h(\bm y)\right)\left(1 - \bm h(y)\right)^{\bm 1}\\
      &= \sqrt{2}^{d} M\left(\bm h(\bm y)\right)\left(1 - \bm y\right)^{-\bm 1}.
    \end{align*}

  We now study the regularity of the function $R$,
    which is tightly related to the regularity of $M$.
  Recall first the expression of the moment generating function for our random vector:
  $$M(\bm t) = \prod_{i=1}^{n} \left(1 - \langle \bm s_i, \bm t \rangle\right)^{-\alpha_i}$$
  Denote by $\mathcal{V}_{R}$ the singular variety of $R$,
    i.e., the set of points where $R$ is not analytic.
  From the singularities of $\bm y \mapsto (1 - \bm y)^{-\bm 1}$ and from those of  $M$,
    we have:
    \begin{align*}
      \mathcal V_{R} &\subseteq \left\{\bm y :\; \exists j,\; y_j = 1\right\}
      \BigCup \BigCup\limits_{i=1}^n \left\{\bm y:\; \langle \bm s_i,\bm h(\bm y) \rangle = 1\right\}.
    \end{align*}

  Consider first the fact that,
    since the model is $\epsilon$-w.b., $\lvert \bm\alpha \rvert > 1$.
  Hence, 
    $M(\bm h(\bm y))$ dominates $(1 - \bm y)^{-\bm 1}$ when $\bm y$ goes to  $\bm 1$ 
    (as $\bm h(\bm y)$ goes to infinity)
    and therefore $R$ is analytic at $\bm 1$.
  Thus, we only need to consider the singularities of $M(\bm h(\bm y))$,
    and we can further restrict $\mathcal V_R$:
  \begin{equation*}
    \mathcal V_{R} \subseteq \BigCup\limits_{i=1}^n \left\{\bm y:\; \langle \bm s_i,\bm h(\bm y) \rangle = 1\right\}.
  \end{equation*}

  We now split the argument into the cases $d=1$ and $d>1$,
    to show that the $\epsilon$-w.b. condition is equivalent to the analyticity of $R$ on the polydisc with polyradius $1+\epsilon$.

  When $d=1$, 
    all singularities of $R$ are such that $s h(y) = 1$ for a scale $s \in \bm s$.
  In other words, the singularities are at the points $y = h(\frac{1}{s})$ for each scale $s \in \bm s$.
  Recall that $-h(x) = h(x^{-1})$, 
    and that $h$ is (strictly) decreasing before and after its vertical asymptote at $1$.
  Three cases arise:
  \begin{itemize}
    \item If $s \in ]0,1[$,
    then $\lvert y \rvert  = \lvert h\left(\frac{1}{s}\right) \rvert = h\left(\frac{1}{s}\right) > 1+\epsilon 
          \iff \frac{1}{s} < h(1+\epsilon) 
          \iff s > h(1+\epsilon)^{-1} = \frac{\epsilon}{2+\epsilon}$.
    \item If $s \in ]1, +\infty[$,
    then $\lvert y \rvert  = \lvert h\left(\frac{1}{s}\right)\rvert = h(s) > 1+\epsilon 
          \iff  s < h(1+\epsilon) = \frac{2+\epsilon}{\epsilon}$.
    \item If $s = 1$, $h\left(\frac{1}{s}\right) = \pm\infty$ and there is no singularity. 
  \end{itemize}

  Hence, 
    $$\min_{y \in \mathcal V_R}\quad \lvert y \rvert > 1+\epsilon \iff \frac{\epsilon}{2+\epsilon} < \bm s < \frac{2+\epsilon}{\epsilon},$$
  which corresponds (with $\lvert\bm\alpha\rvert>1$) to the one-dimensional $\epsilon$-w.b. condition.

  We now turn ourselves to the multivariate case. When $d > 1$,
    the singularities of $\bm t \mapsto M(\bm t)$ are the zeros of the function:
    $$G\,:\, \bm t \mapsto \prod\limits_{i=1}^n \left(1 - \langle \bm s_i, \bm t\rangle\right)^{\alpha_i}.$$

  Any zero $\bm t^*$ of $G$ must satisfy the system of equations $\bm s_{\bm I} \bm t^* = \bm 1$ 
    for a given non-empty $I \subseteq \{1,...,n\}$.
  However,
    for $\bm t^{*}$ to be unbounded, 
    we must have $\sum_{i =1}^{n} \alpha_i \mathbb 1_{i \in I} \ge \sum_{i =1}^{n} \alpha_i \mathbb 1_{i \notin I}$.

  By the definition of a $\epsilon$-w.b. model,
    there exist no such unbounded zeros as,   
    for any $\bm t^*$ zero of $G$ s.t. $\sum_{i =1}^{n} \alpha_i \mathbb 1_{i \in I} \ge \sum_{i =1}^{n} \alpha_i \mathbb 1_{i \notin I}$,
    we have that: $$\exists i \in 1,...,d, \;\lvert h(t_i^*)\rvert > 1+\epsilon.$$
  
  Thus, 
    the singularities of $R$,
    which are images by $\bm h$ of the zeros of $G$,
    must have at least one dimension that is outside the centered polydisc with radius $1+\epsilon$. 

  Hence,
    whatever $d$,
    $R$ is analytic on the centered polydisc with polyradius $1+\epsilon$.

  Now, 
    for any $d$, 
    by Cauchy's inequality,
    see Theorem 2.2.7 in~\cite{hormander1973},
    by taking a $\epsilon'$ strictly between $0$ and $\epsilon$
    we have the wanted finite upper bound on the coefficients:
    $$B(d,\epsilon') = \sup\limits_{\substack{(\bm \alpha, \bm s): \mathcal G_{d,n}(\bm \alpha, \bm s) \text{ is }\epsilon\text{-w.b.}\\
      \forall i, \lvert y_i \rvert < 1+\epsilon'}} R(y) < \infty.$$
\end{proof}

We can now compute efficiently the Laguerre coefficients of densities in $\mathcal G_{d,n}$,
  with an insurance that they would decrease fast if the model is well-behaved.
We propose in the next subsection to finally discuss an estimation algorithm
  based on an Integrated Square Error loss that leverages these coefficients.

\subsection{Consistency of the induced empirical loss}\label{sec:incomplte_loss}

Expressing the density of $\mathcal{G}_{d,n}$ random vectors into the basis from Definition~\ref{def:laguerre_basis} 
  allows us to truncate the basis and effectively compute an approximated density.
From this,
  by evaluating empirical Laguerre coefficients from data,
  we will fit the parameters to data by minimizing the $L_2$ distance between coefficients (since the basis is orthonormal).

For parameters $\bm \alpha,\bm s$,
  denote by $f_{\bm \alpha,\bm s  }$ the density of the $\mathcal G_{d,n}(\bm \alpha,\bm s  )$ distribution.

For a number of observations $N \in \mathbb N$, suppose $\bm X_1,...\bm X_N$ are i.i.d. random vectors from a true unknown density $f$ with support $\mathbb R_+^d$.

We would like to minimize the integrated square error:
$$\norme{f - f_{\bm \alpha,\bm s  }} =
  \sum\limits_{\bm k \in \mathbb N^d} \left(a_{\bm k}(f) - a_{\bm k}(\bm \alpha,\bm s  )\right)^2.$$

Estimating $a_{\bm k}(f) = \int f(\bm x) \phi_{\bm k}(\bm x) \,d\bm x$ 
by a simple Monte-Carlo plug-in
  \begin{equation}\label{eq:a_k_hat}
    \widehat{a_{\bm k}} = \frac{1}{N}\sum\limits_{i = 1}^{N} \phi_{\bm k}(\bm X_i),
  \end{equation}
  we could compute an approximation of this loss.
However,
  we cannot compute all couples of coefficients $\widehat{a_{\bm k}}$ 
  and $a_{\bm k}(\bm \alpha,\bm s)$ for all $\bm k \in \mathbb N^d$,
  so that we are forced to truncate the basis.

\begin{definition}[$\mathcal G_{d,n}$ parameters estimator] For a set of i.i.d. random vectors $\bm X = (\bm X_1,...,\bm X_N)$,
  we define parameters estimator of a well-behaved $\mathcal G_{d,n}$ distribution that matches the observations as:
  $$\left(\hat{\bm \alpha},\hat{\bm s}\right) =
  \argmin\limits_{\mathcal G_{d,n}(\bm \alpha,\bm s) \,\text{w.b.}} \;L_{\bm m}(\bm X,\bm \alpha,\bm s  ),$$
  where the approximated truncated loss $L_{\bm m}$ is given by:
  $$L_{\bm m}(\bm X,\bm \alpha,\bm s  ) = 
    \sum\limits_{\bm k \le \bm m} \left(\widehat{a_{\bm k}} - a_{\bm k}(\bm \alpha,\bm s  )\right)^2,$$
  and where the empirical Laguerre coefficients $\widehat{a_{\bm k}}$ are given by \eqref{eq:a_k_hat}.
\end{definition}

The loss $L_{\bm m}(\bm X,\bm \alpha,\bm s  )$ can be efficiently computed through Algorithm~\ref{algo:DensityOMGC}.
It will be $0$ if the first coefficients of the Laguerre expansion of $f$ and of our estimator match perfectly,
  i.e.,
  assuming $f \in \mathcal{G}_{d,n}$ and  $\bm m$ big enough,
  if we found the right Thorin measure.
Note that if the goal was given theoretically,
  like in a projection from $\mathcal{G}_d$ to $\mathcal{G}_{d,n}$,
  we could compute $\widehat{a_{\bm k}}$ by formal integration with high precision instead of Monte-Carlo.
We will discuss this option in the next section.
However,
  even in this case,
  the error that comes from the truncation of the basis cannot be computed.

To show the consistency of this loss, we use in the following the results from Theorem~\ref{prop:bounds}.
\begin{property}[Consistency]\label{prop:consistency} Consider that $f \sim \mathcal G_{d,n}(\bm \alpha_0,\bm s_0)$ is well-behaved,
	  and denote $\bm X$ a set of $N$ i.i.d. random vectors with this distribution.
	Fix $\bm m \in \mathbb{N}^d$.
  Suppose that we have a global minimizer
    $$\left(\hat{\bm \alpha},\hat{\bm s}\right) =
      \argmin\limits_{\mathcal G_{d,n}(\bm \alpha,\bm s) \,\text{w.b.}} L_{\bm m}(\bm X,\bm \alpha,\bm s),$$
	  which depends on the threshold $\bm m$ and on the $N$ observations $\bm X$.
	Then:
    $$\lVert f - f_{\hat{\bm \alpha},\hat{\bm s}}\rVert_2^2 \xrightarrow[\substack{N \to \infty\\ \bm m \to \infty}]{a.s} 0.$$

\begin{proof} 
  To show the result, 
    we start by expressing the loss in the Laguerre basis, 
    and we split the basis on indices smaller and greater than $\bm m$.
  For the sake of simplicity, we will omit $f$ in $a_{\bm k}(f)$ and denote $a_{\bm k} = a_{\bm k}(f)$. We have:
	\begin{align*}
		\lVert f - f_{\hat{\bm \alpha},\hat{\bm s}}\rVert_2^2 
    &= \sum\limits_{\bm k \in \mathbb N^d} \left(a_{\bm k} - a_{\bm k}(\hat{\bm \alpha},\hat{\bm s})\right)^2\\
		&= \underbrace{\sum\limits_{\bm k \le \bm m} \left(a_{\bm k} - a_{\bm k}(\hat{\bm \alpha},\hat{\bm s})\right)^2}_{A} + 
		   \underbrace{\sum\limits_{\bm k > \bm m} \left(a_{\bm k} - a_{\bm k}(\hat{\bm \alpha},\hat{\bm s})\right)^2}_ {B},
	\end{align*}
  where $\bm k > \bm m \iff \exists i \in 1,...,d :\; k_i > m_i$.

  We first show that $A \xrightarrow[N \to \infty]{a.s.} 0$ for any fixed $\bm m$. 
  For $A$,
    we can treat each term in the summation independently as the sum is finite.
	$A$ can be then expanded further by plugging-in the Monte-Carlo estimator
  $\widehat{a_{\bm k}} = \frac{1}{N}\sum\limits_{i = 1}^{N} \phi_{\bm k}(\bm X_i)$ and expanding the square:
	\begin{align*}
		A &= \underbrace{\sum\limits_{\bm k \le \bm m} \left(a_{\bm k} 
            - \widehat{a_{\bm k}}\right)^2}_{A_1} +
			   \underbrace{2\sum\limits_{\bm k \le \bm m} \left(a_{\bm k} 
            - \widehat{a_{\bm k}}\right)\left(\widehat{a_{\bm k}} 
            - a_{\bm k}(\hat{\bm \alpha},\hat{\bm s})\right)}_{A_2} +
         \underbrace{\sum\limits_{\bm k \le \bm m} \left(\widehat{a_{\bm k}} 
            - a_{\bm k}(\hat{\bm \alpha},\hat{\bm s})\right)^2}_{L_{\bm m}(\bm X,\hat{\bm \alpha},\hat{\bm s})}.
	\end{align*}

	Now,
	  since $\widehat{a_{\bm k}} \xrightarrow[N \to \infty]{a.s.} a_{\bm k}$ by the strong law of large numbers,
	  $A_1 \xrightarrow[N \to \infty]{a.s.}0$.

	Furthermore,
	  since we restricted the optimization to well-behaved estimators,
    each $a_{\bm k}(\hat{\bm \alpha},\hat{\bm s})$ is bounded by Theorem~\ref{prop:bounds}.
  Now $\lvert \widehat{a_{\bm k}} \rvert \le \sqrt{2}^d$ because $\lvert \phi_{\bm k}(\bm x) \rvert \le \sqrt{2}^d$
    whatever $\bm x \in \mathbb R_+^d$.
	Therefore, 
	  for any $\bm k$, 
	  $\left(\widehat{a_{\bm k}} - a_{\bm k}(\hat{\bm \alpha},\hat{\bm s})\right)$ is bounded
    and $\left(a_{\bm k} - \widehat{a_{\bm k}}\right) \xrightarrow[N \to \infty]{} 0$, 
	  which makes $A_2 \xrightarrow[N \to \infty]{a.s.} 0$.

	Last but not least,
	  by definition of $\left(\hat{\bm \alpha},\hat{\bm s}\right)$ as global minimizers,
	  $$L_{\bm m}(\bm X,\hat{\bm \alpha},\hat{\bm s}) \le\allowbreak L_{\bm m}(\bm X,\bm \alpha_0,\bm s_0) \xrightarrow[N \to \infty]{a.s.} 0,$$ 
    since Monte-Carlo estimators are not biased, and $\left(\bm \alpha_0,\bm s_0\right)$ is inside the range of optimizations.
	Therefore,
    we can conclude that $A \xrightarrow[N \to \infty]{a.s.} 0$ for any $\bm m$ fixed. 

	Now consider the remainder $B$.
  We show that $B \xrightarrow[\bm m \to \infty]{a.s.} 0$ uniformly in $N$. 
  Since both the true model and the approximation are well-behaved, 
  there exists $\epsilon > \epsilon' > 0$ so that they are both $\epsilon$-w.b.
By Theorem~\ref{prop:bounds} we have then,
  \begin{align*}
    \lvert a_{\bm k}\rvert &\le B(d,\epsilon') (1 + \epsilon')^{-\lvert\bm k\rvert}\\
    \text{and }\lvert a_{\bm k}(\hat{\bm \alpha},\hat{\bm s})\rvert &\le B(d,\epsilon') (1 + \epsilon')^{-\lvert\bm k\rvert}.
  \end{align*}
  Therefore:
  \begin{align*}
    B &= \sum\limits_{\bm k > \bm m} \left(a_{\bm k} - a_{\bm k}(\hat{\bm \alpha},\hat{\bm s})\right)^2\\
      &\le 4 B(d,\epsilon')^2 \sum\limits_{\bm k > \bm m} \left((1+\epsilon')^2\right)^{-\lvert \bm k\rvert} \xrightarrow[\bm m \to \infty]{a.s.} 0,
  \end{align*}
    uniformly in $N$,
    which concludes the argument.
\end{proof}
\end{property}

Although the consistency result restricts the parameter space to only well-behaved Thorin measures, 
  in practice we run our optimization procedures without this restriction,
  and simply check at the end that the resulting estimators are well-behaved.
When $d=1$, we can always fix $\epsilon$ to make the estimated gamma convolution $\epsilon$-w.b. if $\lvert\hat{\bm\alpha}\rvert>1$.
In practice, 
  numerical computations have troubles producing collinear scales, 
  so every estimated multivariate gamma convolution is usually well-behaved
  as soon as the total mass of the produced Thorin measure is greater than $1$.
Recall that univariate distributions like log-Normal and Pareto have an infinite total mass for the Thorin measure.

We now have a reliable loss to minimize and an efficient algorithm to compute it,
  allowing us to estimate multivariate gamma convolutions from data.
To test the approach,
  we propose in the next section to discuss some potential applications.

\section{Investigation of performance}\label{sec:investigation}

In this section,
  we show the results of our algorithm on several examples,
  both in univariate and multivariate settings.

Note that,
  through the recursive Faà di Bruno's formula in Algorithm \ref{algo:DensityOMGC},
  we produce Laguerre coefficients as high degree multinomials into
  $\bm \alpha,\mu_{\bm 0}$ and $\bm \xi$ (the simplex projection of the scales).
The loss will therefore have a myriad of local minima,
  making it non-convex and forcing us to use global minimization routines.
We found the Particle Swarm~\cite{koyuncu2019,zong-junli2016} global optimization routine
  to work particularly well.

All the implementation was gathered in the provided Julia package,
  along with the code corresponding to this investigation.
Indeed, 
  Julia allowed the use of high-precision arithmetic together 
  with compiled code and global optimizations routines without the (potentially error-prone)
  reimplementation of at least one of these three features\footnote{For more details about Julia, see julialang.org.},
  which was not possible in Python, R, Matlab or C++ for example.

We start by the projection of known densities onto the $\mathcal G_{1,n}$ class,
  differentiating the cases when the known density is inside $\mathcal G_{1}$ or not.
Then we discuss the estimation of $\mathcal G_{1,n}$ models from empirical data,
  with heavy tailed simulated data,
  and we finally look at the estimation of $\mathcal{G}_{d,n}$ models from multivariate empirical data,
  with a particular emphasis on the properties of the dependence structure underlying the data.
We conclude by a direct application on a real dataset, using the \emph{Loss-Alae} dataset from Klugman \& Parsa~\cite{klugman1999}.

\subsection{Projection from a known density}\label{sec:projection_experiemnt}

The two first examples are a log-Normal and a Weibull, 
  that have the particularity of being respectively inside and outside $\mathcal G_{1}$.
We will also take a look at the Pareto case, 
  which is inside $\mathcal{G}_1$,
  but might be outside $L_2(\mathbb R_+)$.

We want to compare our algorithm to the MFK procedures, discussed in Section~\ref{sec:projection_furman} on fair grounds.
Therefore,
  we slightly modify our algorithm to use coefficients based on the \emph{theoretical} integrals $\mu_{k,-1}$,
  as MFK.
The choice of the input distributions is also guided by~\cite{miles2019,furman2017}.
The experiment is as follows:

\begin{itemize}
	\item We compute shifted moments $\left(\mu_{k,-1}\right)_{k \le 2n}$
    of the distribution through direct integration of the density,
	  at 1024 bits of precision (to match the 300 digits that MFK requires).
	\item We use MFK to fit a $\mathcal{G}_{1,n}$ based on these moments.
	\item Through Eq \ref{eq:laguerre_coef_expression},
	  we compute Laguerre coefficients based on the same shifted moments $\mu_{k,-1}$,
	  and we minimize the loss given through Algorithm \ref{algo:DensityOMGC} to obtain our approximation.
	\item We compute Kolmogorov-Smirnov (one-sample, exact) distances between $N=10\,000$ simulations
    from the estimated $\mathcal G_{1,n}$ model to the theoretical distribution,
	  on $B=100$ different simulations.
\end{itemize}

\subsubsection{Projection from \texorpdfstring{$\mathcal{G}_1$}{G1} densities to \texorpdfstring{$\mathcal{G}_{1,n}$}{G1n}}\label{sec:log-Normal-projection}
The log-Normal distribution is a fundamental distribution in the $\mathcal{G}_{1}$ class.
The proof that it belongs to $\mathcal{G}_1$ is actually what historically initiated the study of the class by Thorin.
To match MFK's experiment,
  we conduct our first comparison with a log-Normal $LN(\mu=0,\sigma=0.83)$ density.
The resampled Kolmogorov-Smirnov distances are summarized,
  for several $n$ and for both algorithms,
  in Figure~\ref{fig:furman_ln_violin}.
\begin{figure}[H]
	\centering
	\includegraphics[width=\textwidth]{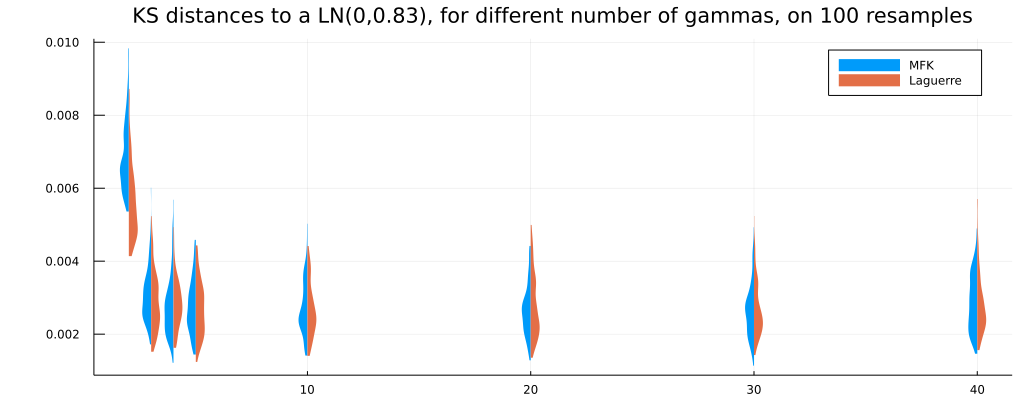}
	\caption{Violin densities of resampled KS distances (smaller is better).
			     In abscissa is $n \in \{2,3,4,5,10,20,30,40\}$, the number of gammas used in the approximation.}%
	\label{fig:furman_ln_violin}%
\end{figure}
Although MFK has a very strong convergence result,
  for $n=2$ gammas our estimator performs better with the same information 
  (theoretical shifted moments given with $1024$ bits precision),
  and that for $n > 2$,
  the performance is overall the same.
Indeed,
  we see that in most cases the two algorithms found parameters that are close to each other.
We reported the shapes and scales that both algorithms produced for $n=2,3,4$ and $5$ in Table \ref{tab:furman_compare}.
\begin{table}[H]
  \scriptsize
	\caption{\label{tab:furman_compare}Estimated parameters from both algorithms on the  log-Normal example}
	\centering
	\begin{tabu} to \linewidth {>{\raggedright}X>{\centering}X>{\centering}X>{\centering}X>{\centering}X}
	\toprule
	\multicolumn{1}{c}{ } & \multicolumn{2}{c}{MFK} & \multicolumn{2}{c}{Laguerre} \\
	\cmidrule(l{3pt}r{3pt}){2-3} \cmidrule(l{3pt}r{3pt}){4-5}
	 & $\hat{\bm \alpha}$ & $\hat{\bm s}$ & $\hat{\bm \alpha}$ & $\hat{\bm s}$\\
	\midrule
	\addlinespace[0.3em]
	\hspace{1em}\textbf{n=2} & 0.5688 & 1.5622 & 0.5458 & 1.6283\\
	\hspace{1em} & 2.4596 & 0.1941 & 2.4539 & 0.1999\\
	\addlinespace[0.3em]
	\hspace{1em}\textbf{n=3} & 0.2195 & 2.4902 & 0.2070 & 2.5781\\
	\hspace{1em} & 0.8934 & 0.6718 & 0.8919 & 0.6875\\
	\hspace{1em} & 2.6081 & 0.0972 & 2.6071 & 0.0987\\
	\addlinespace[0.3em]
	\hspace{1em}\textbf{n=4} & 0.0919 & 3.4076 & 0.0844 & 3.5307\\
	\hspace{1em} & 0.4596 & 1.2208 & 0.4555 & 1.2513\\
	\hspace{1em} & 1.0042 & 0.3737 & 1.0063 & 0.3792\\
	\hspace{1em} & 2.6956 & 0.0588 & 2.6957 & 0.0594\\
	\addlinespace[0.3em]
	\hspace{1em}\textbf{n=5} & 0.0398 & 4.3466 & 0.0346 & 4.5447\\
	\hspace{1em} & 0.2569 & 1.7963 & 0.2492 & 1.8576\\
	\hspace{1em} & 0.5708 & 0.7234 & 0.5721 & 0.7394\\
	\hspace{1em} & 1.0576 & 0.2399 & 1.0609 & 0.2428\\
	\hspace{1em} & 2.7574 & 0.0396 & 2.7582 & 0.0399\\
	\bottomrule
	\end{tabu}
\end{table}

Although the produced estimations are close to each other on this example,
  our method has several clear benefits:
  the full density does not need to belong to $\mathcal{G}_1$,
  and we do not require shifted moments up to $300$ digits precision.
In the next subsection,
  we describe the same experiment on a Weibull density that is known to make MFK simply fail,
  even with accurate enough computations.

\subsubsection{Projection from densities outside \texorpdfstring{$\mathcal{G}_{1}$}{G1} to \texorpdfstring{$\mathcal{G}_{1,n}$}{G1n}}

Let $X$ be a Weibull r.v. with shape $k>0$.
Its pdf can then be written as:
  $$f(x) = k x^{k-1} e^{-x^{k}} \bm 1_{x \ge 0}.$$
	
When $k > 1$,
  this distribution does not belong to $\mathcal{G}_1$.
In~\cite{miles2019,furman2017},
  authors report negative shapes for the MFK approximation in $\mathcal{G}_{1,2}$ of the Weibull with shape $k = \frac{3}{2}$,
  which is clearly a failure.
This failure is occurring because the truncated moment problem that MFK is solving has no solution,
  as we detailed in Section \ref{sec:projection_furman},
  since the (normalized) cumulants  $\bm \xi$ are not inside the moment cone.

However,
  the Laguerre basis is an orthonormal basis of $L_2(\mathbb R_{+})$,
  which contains this density.
The projection procedure detailed at the top of Subsection~\ref{sec:projection_experiemnt} still works correctly:
  it produces meaningful shapes and scales that are positive,
  whatever the number of gammas we ask for.
Figure~\ref{fig:weibull_projection_boxplot} displays a Violin plot of KS distances for several numbers of gammas:

\begin{figure}[H]
	\centering
	\includegraphics[width=\textwidth]{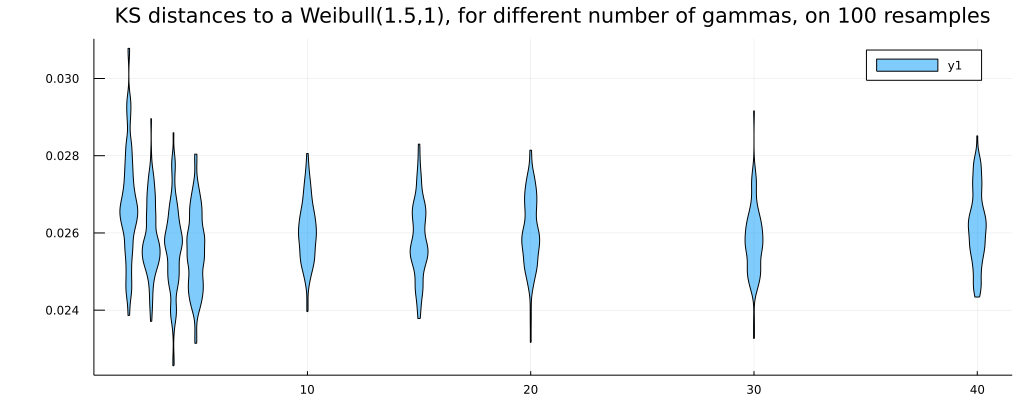}
	\caption{Violin densities of resampled KS distances for approximation of a Weibull($k=1.5$).
	In abscissa is $n \in \{2,3,4,5,10,20,30,40\}$, the number of gammas used in the approximation.
  Only the Laguerre approximation results are presented.}%
	\label{fig:weibull_projection_boxplot}%
\end{figure}

On Figure~\ref{fig:weibull_projection_boxplot},
  we see that the KS distance is decreasing for an increasing number of gammas $n$,
  when $n$ is small,
  but reaches a minimum quite quickly.
Indeed,
  the distance is lower-bounded since the target is not in the class.
Although none of the proposed approximations passes the KS test,
  we will revisit this example later from an estimation point of view to show that
  the projection is still an acceptable approximation of the input Weibull distribution.

Note that the KS distance is not stable for increasing $n$:
  the algorithm has troubles to fix parameters to $0$ if needed.
An additional penalty term in this direction might be a good solution to overcome this behavior.

Through these two first examples,
  we showed the projection possibilities of our algorithm,
  and compared them with MFK,
  the only existing method in the literature.
We saw that for input densities inside the $\mathcal G_1$ class our method performs as good as MFK.
Moreover,
  our method still works properly and produces meaningful results for examples outside the class,
  contrary to MFK which does not give any result.
We will now talk about estimation from data,
  a problem that has never been solved for gamma convolutions,
  and showcase how our algorithm can find a good representation for a given dataset,
  whatever the dimension of the dataset.

In the following,
  we will treat empirical datasets given in standard \texttt{IEE 745 float64} precision.

\subsection{Estimation from univariate empirical data}

Before coming back in more details to the Weibull case,
  we wanted to perform some tests with empirical data coming from simulations of two heavy-tailed distributions: 
  log-Normal and Pareto.

\subsubsection{Heavy-tailed examples}

We consider first a dataset of $100'000$ samples from a log-Normal distribution
  with the same parameter values as in Section~\ref{sec:projection_experiemnt},
  $LN(\mu = 0,\sigma = 0.83)$,
  in $64$ bits precision.

From this sample,
  we compute empirical Laguerre coefficients,
  and then we optimize the set of shapes and scales of the estimated density to minimize
  the truncated $L_2$ loss between them and the ones produced through Algorithm \ref{algo:DensityOMGC}.
Following~\cite{furman2017,miles2019},
  we choose the size of the basis as being $m=2n+1$,
  such that if we remove the first moment that is irrelevant,
  we have a number of moments equal to the number of parameters\footnote{Since Julia starts counting at $1$ and not $0$,
    we actually have $m$ moments and not $m+1$ as we did in the previous theoretical analysis.}.
We ran the experiment for different number of gammas $n = 10,20$ and $40$.
The corresponding results are respectively available in Figure~\ref{fig:log-Normal10} for $n=10$,
  and in Figures~\ref{fig:log-Normal20} and \ref{fig:log-Normal40} for $n=20,40$, see Appendix~\ref{sec:figs_and_tables}.

\begin{figure}[H]
  \centering
  \includegraphics[width=\textwidth]{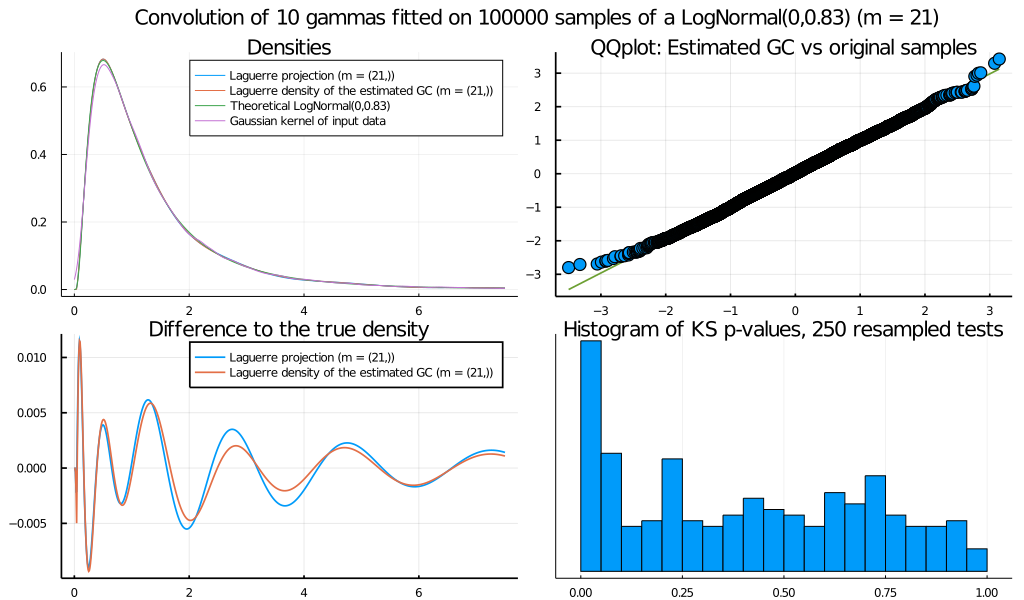}
  \caption{Log-Normal results with $10$ gammas:
        Upper left:
          the comparison of the density approximations.
        Lower left:
          the difference of to the true density.
        Upper right:
          a quantile-quantile plot of the approximation against the true distribution.
        Lower right:
          p-values of one-sample KS tests of simulations from the estimator against the true distribution.}%
  \label{fig:log-Normal10}%
\end{figure}

We see on these graphs that the approximation is good enough to fool the Kolmogorov-Smirnov test.
The absolute pointwise error between the Laguerre approximation of the density of the gamma convolution
  and the true log-Normal density is lower than $0.005$ for $n=10,20$,
  and twice smaller for $n=40$.
A positive but surprising result is that,
  although the distance between the Laguerre approximation and the true density does not reduce with increasing $m$,
  the distance between the gamma approximation and the true density does.

The KS test setup is as follows:
we simulate $250$ resamples of $100'000$-sized dataset from the estimated density,
  and compare it with a one-sample exact KS test to the true density (and not to the samples that the estimator was computed from).
For all values of $n$,
  we observe close-to-uniform distribution of the p-values,
  which is a good result.

We also treated several datasets simulated from Pareto's distribution,
  with exactly the same experiment.
Pareto's distribution has a shape parameter $k > 0$ that influences the thickness of the tail:
  the distribution has a variance if and only if $k > 2$,
  an expectation if and only if $k > 1$ and the density belongs to $L_2$ if and only if $k > 1/2$.
Note that these inequalities are strict. 

The fact that the distribution has no variance or no expectation is not really a problem for our procedure,
  since we use only shifted moments that all Pareto distributions have,
  but our convergence result requires a square-integrable density.
Therefore,
  we propose to check several values of the shape parameter $k$,
  covering all cases,
  while varying the number $n$ of gammas.
Figure~\ref{fig:Pareto_summary} summarizes the results by only showing 
  quantile-quantile plots of the estimated distributions against the starting Pareto distribution.
All the models follow the same procedure as for the previous log-Normal estimators.

\begin{figure}[H]
	\centering
	\includegraphics[width=\textwidth]{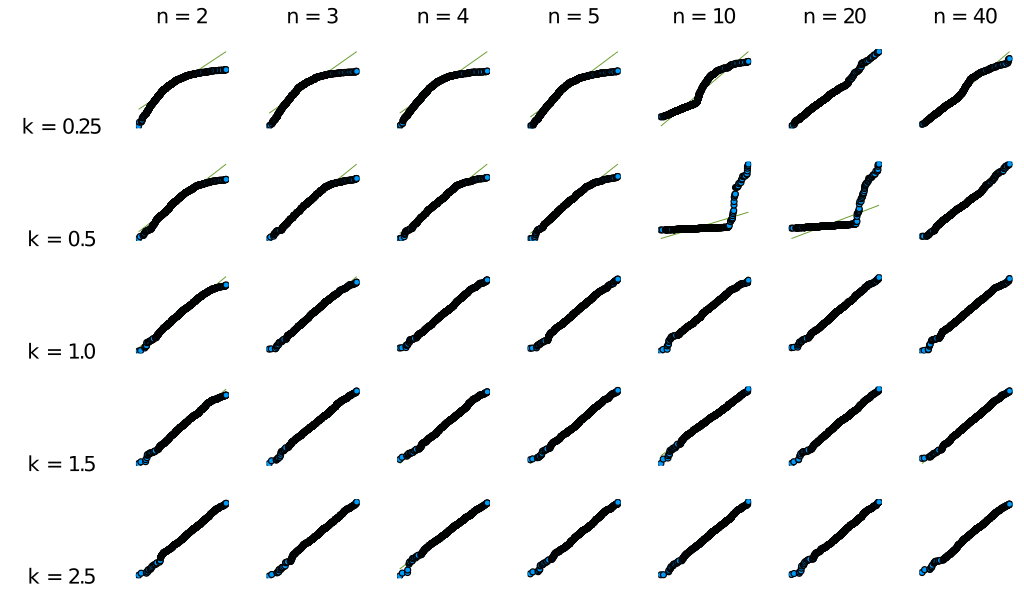}
	\caption{Quantile-quantile plots for Pareto experiments,
             with $N=1000$ samples (log-scaled).
           We only show the $995$ first points:
             $5$ points are excluded in the tail for clarity.
           Each row corresponds to a shape parameter for the Pareto,
             and each column to a number of gammas.}%
	\label{fig:Pareto_summary}%
\end{figure}

Note that even quantile-quantile plots between two simulations from the same Pareto distribution
  have a tendency to deviate in the extremes (due to the fat tail),
  which is why we excluded the last points.
In each case,
  we also ran the same Kolmogorov-Smirnov testing procedure as for the  log-Normal,
  and obtained very good uniformity of the p-values,
  except for k = 0.25 when $n$ is too small.
We see on the graph that increasing the number of gammas usually produces better results,
  but also that the heavier the tail (smaller shapes) the harder it is to get a good fit.

We see on these graphs that the method is accurate enough for reasonably high quantiles,
  and very good in the body of the distribution.
The $k = 0.25$ case is not in the theoretical boundaries for the convergence result to take place,
  we have to wait until at least $20$ gammas to have an acceptable fit.

\subsubsection{An example outside the class}

We revisit the Weibull($k=1.5$) case of Section~\ref{sec:projection_experiemnt},
  a distribution that is not inside the class.
We ran our algorithm on $100'000$ samples from a Weibull with shape $k = \frac{3}{2}$,
  with successive numbers of gammas of $2,10,20$ and $40$.
The corresponding graphs are shown in Figure~\ref{fig:weibull1.5_rez}.
Here,
  we show only the difference on the density estimation and the quantile-quantile plot 
  as one notable thing is that the Laguerre expansion of the density gets better and better 
  as we increase the size of the basis (jointly with the number of gammas),
  but the estimator does not:
the difference to the density is always the same,
  and always makes Kolmogorov-Smirnov reject the proposed estimator,
  although quantile-quantile plots indicate correct fits.

\begin{figure}[H]
	\centering
	\includegraphics[width=\textwidth]{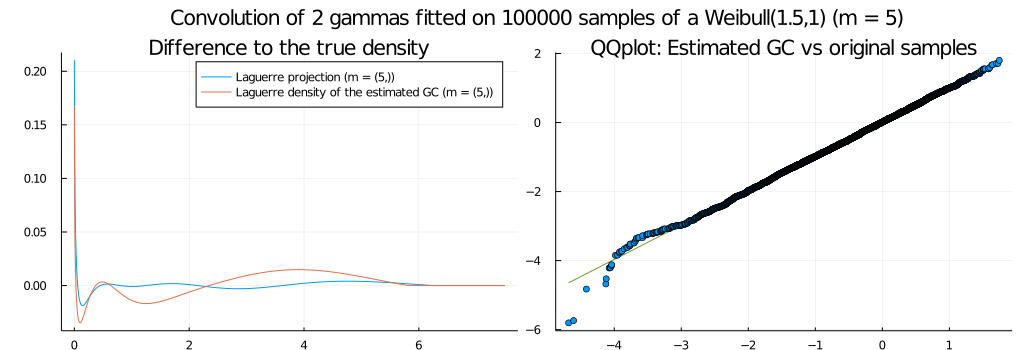}\\
	\includegraphics[width=\textwidth]{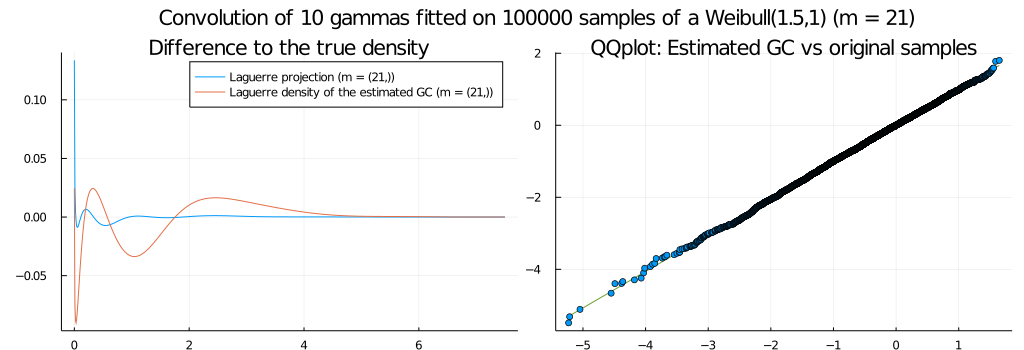}\\
	\includegraphics[width=\textwidth]{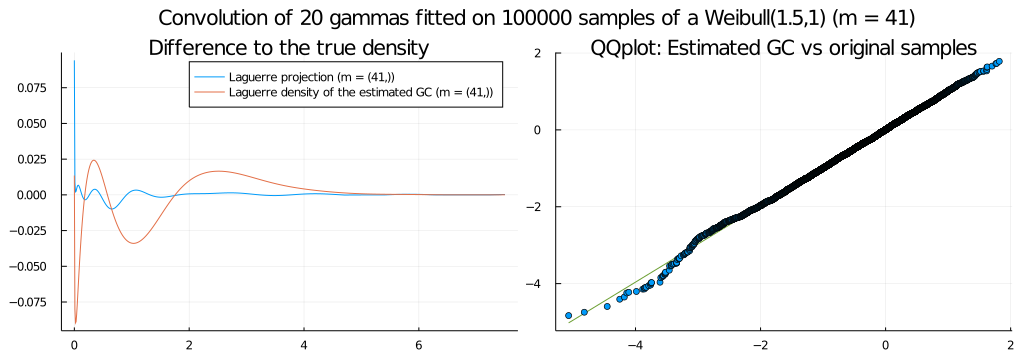}\\
	\includegraphics[width=\textwidth]{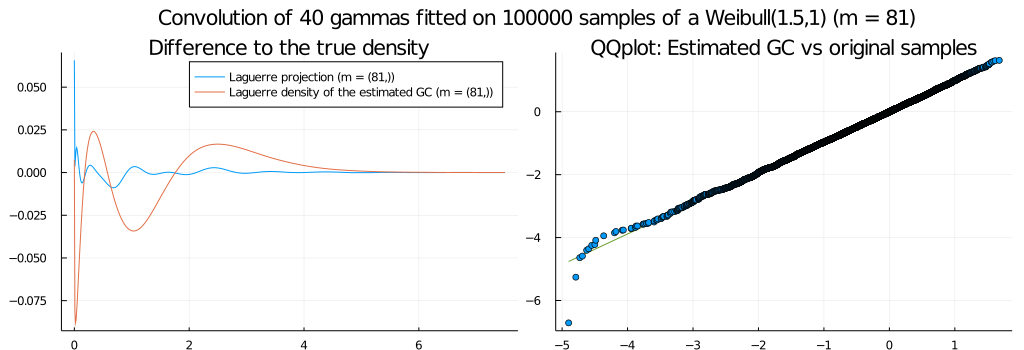}
	\caption{Weibull results with $2,10,20$ and $40$ gammas.
Same legend as Figure~\ref{fig:log-Normal10}.}%
	\label{fig:weibull1.5_rez}%
\end{figure}

On these estimations,
  several things should be noted:

\begin{itemize}
	\item The KS tests reject the hypothesis that the distributions are the same whatever the number of gammas,
	  which was expected.
	\item The pointwise difference on the density of the estimator does not match the error of the Laguerre expansion
    (which is getting smaller as the basis grows) but stays the same whatever the size of the basis and the number of gammas.
	This error represents some kind of distance between the Weibull distribution and the class.
	\item The quantile-quantile plot however indicates a correct fit on all the distribution except maybe the left tail.
\end{itemize}

These three points show that our procedure works even for data that are far from the $\mathcal{G}_1$ class,
  finding some orthogonal projection,
  relative to the basis,
  onto the class.
This is a very good point for the procedure.

In the next examples,
  we will showcase some multivariate uses of the procedure,
  with a particular emphasis on the dependence structure of the simulated data and of the produced estimators.

\subsection{Estimation from multivariate empirical data}

We now discuss multivariate capabilities of our algorithm.
As no known algorithm estimates multivariate gamma convolutions,
  we are not able to compare our results to other procedures.
However,
  we can still check quantile-quantile plots for marginals and 
  verify that the shape of the dependence structure matches the inputted one.

As use-cases might vary,
  we propose two different examples:
  a multivariate log-Normal,
  with a Gaussian copula that induces no tail dependency,
  and a (survival) Clayton copula,
  including a strong tail dependency.
Note that we reduced the exposition to estimation from empirical data,
  but formal integration to obtain $\left(\mu_{\bm k,
  -\bm 1}\right)_{\bm k \le \bm m}$
  from a formal density at any desired computational precision is also a perfectly fine use case.
However, we do not have a clever way of choosing the parameter $\bm m$ yet.
We found that setting all $m_i$'s equal to $n$ provide enough evaluation points to obtain a good fit on our simple examples.

\subsubsection{A Gaussian example}

We do not know if the $d$-variate log-Normal,
  defined as the componentwise exponentiation of a $d$-variate Gaussian random vector,
  is inside $\mathcal{G}_d$.
However,
  we can still run our procedure and observe the resulting approximations.

A first result is given in Figure~\ref{fig:MLN1},
  that corresponds to a $\mathcal{G}_{2,10}$ estimation taken from $100'000$ data points simulated from a
  $MLN(\bm \mu = \bm 0,\bm \sigma = 1,\rho = 0.5)$.
Figure~\ref{fig:MLN1} proposes quantile-quantile plots for the marginals,
  and a simple Gaussian kernel density estimator for the pseudo-observations on the original and estimated models.

\begin{figure}[H]
	\centering
	\includegraphics[width=0.95\textwidth]{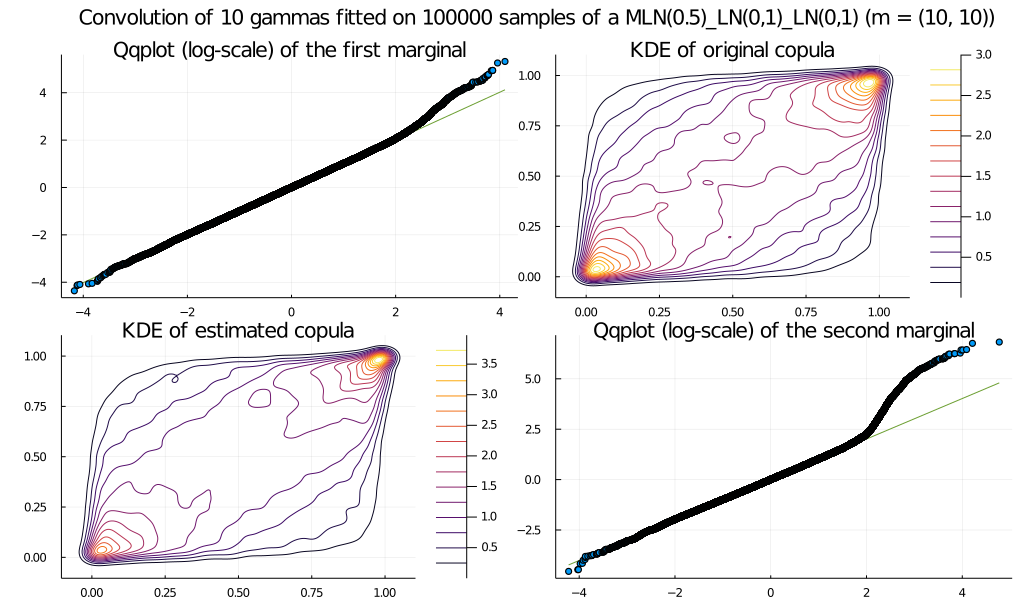}
	\caption{Multivariate log-Normal results with $10$ gammas. 
  Top left and bottom right: 
  quantile-quantile plots for
  the two marginals. 
Bottom left and top right: 
  levels plots of pseudo-observations from simulations from,
  respectively,
  the estimated model and the dataset.}%
	\label{fig:MLN1}%
\end{figure}
We see on Figure~\ref{fig:MLN1} that the dependence structure was correctly reproduced by
  the approximation as a convolution of $10$ (comonotonic) gamma random vectors,
  but marginal tails are not estimated correctly.
Thankfully,
  pushing the number of multivariate gammas to $n=20$ solves this issue,
  as shown in Figure~\ref{fig:MLN2}:
\begin{figure}[H]
	\centering
	\includegraphics[width=0.95\textwidth]{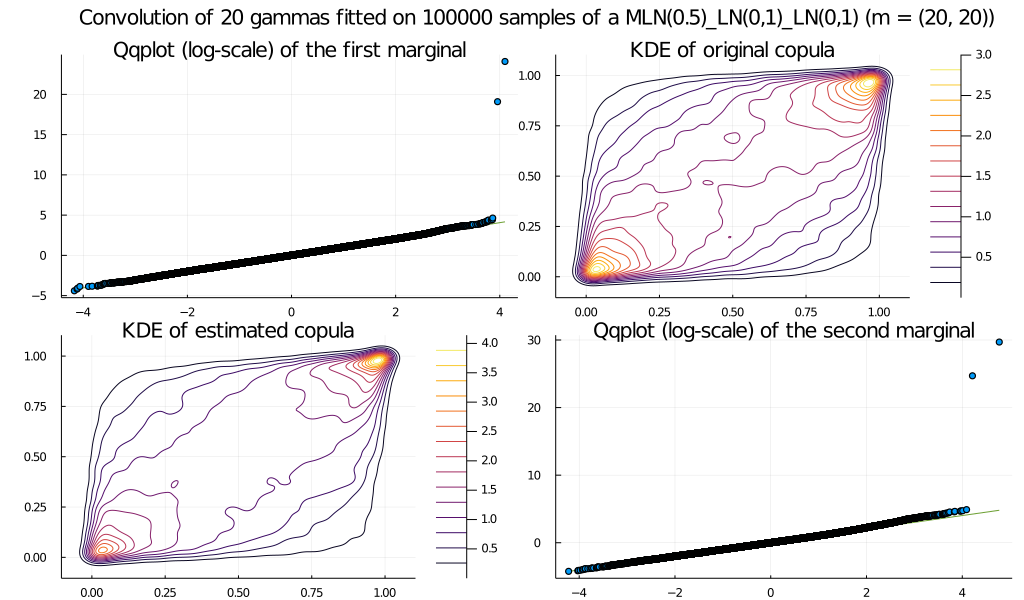}
	\caption{Multivariate log-Normal results with $20$ gammas. Same legend as Figure~\ref{fig:MLN1}.}%
	\label{fig:MLN2}%
\end{figure}

On Figure~\ref{fig:MLN2},
  we see that the dependence structure is still very good,
  and that the problem of the marginal tails has been solved by adding a few more gammas.
Note that since we have $100'000$ points in the quantile-quantile plots,
  one point out of the line corresponds to a deviation at the $0.00001$-quantiles,
  which are of little importance in many use cases.

Overall,
  marginal quantile-quantile plots are correct when the number of gammas is big enough,
  and the Gaussian dependence structure is well-fitted.
The Gaussian copula is a copula that exhibits no tail dependency.
We already know that our model can exhibit asymmetric behaviors,
  but the possibility of tail dependency is also an important feature to have.
Therefore,
  in the next example we run the model on datasets that have such a feature.

\subsubsection{A copula with tail dependency}

We fit data whose dependence structure is given through a (survival) Clayton copula.
This copula is a symmetric copula that exhibits upper tail dependency.
We took here a parameter $\theta = 7$,
  yielding a strong positive dependence structure.

In Figure~\ref{fig:clayton20},
  you can see the results corresponding to data sampled from a Clayton copula,
  a Pareto(k = 2.5) marginal and a log-Normal $LN(0,0.83)$ marginal.
The conditions of the experiments are identical to the previous subsection.

\begin{figure}[H]
	\centering
	\includegraphics[width=0.95\textwidth]{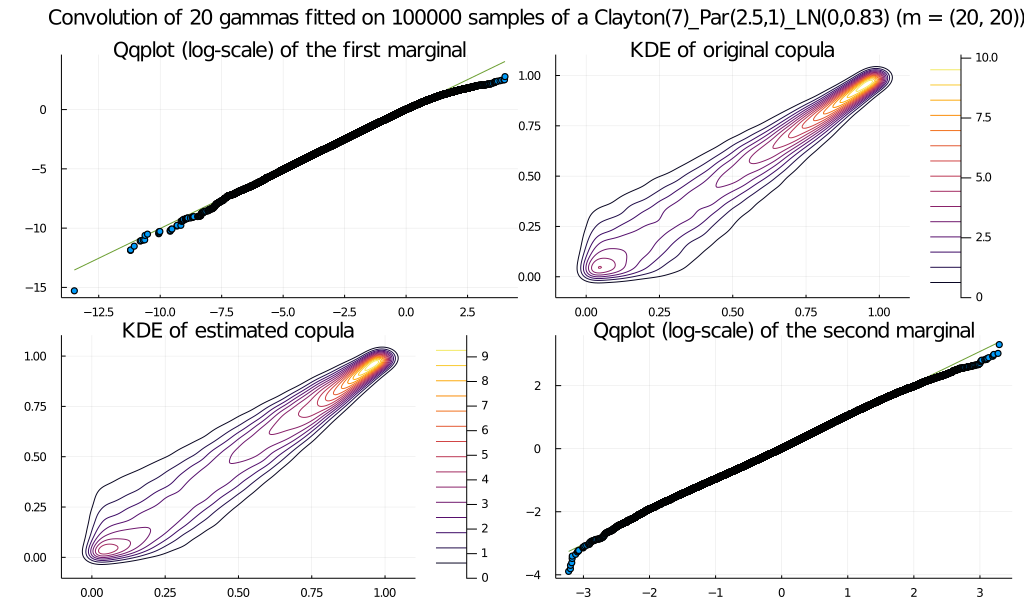}
	\caption{Results with $20$ gammas.
			 Same legend as Figure~\ref{fig:MLN1}.}%
	\label{fig:clayton20}%
\end{figure}

We note that marginal upper tails are not perfectly fitted:
  this could probably be overcome by taking a greater number of gammas.
However,
  we see that the shape of the estimated copula is not perfectly symmetric:
  there is no constraint on the model to produce a symmetric dependence structure,
  and therefore producing one is hard.
The small misfit around the $(0.25,0.02)$ quantile is probably due to this:
  marginals are not on the same scale,
  and therefore the Laguerre coefficients with $k_1 \gg k_2$ are probably on a different scale than those with $k_2 \gg k_1$.
The goodness of fit is therefore not symmetric.

\subsubsection{A real dataset}
To test the algorithm on more realistic data,
  we chose the Loss-Alae dataset from Klugman \& Parsa~\cite{klugman1999}.
This dataset features $1500$ observations of two variables: “Loss” and “Alae”.
We refer to ~\cite{klugman1999} for description of this (quite standard) dataset
  in insurance valuations and copula estimations fields.

\begin{figure}[H]
	\centering
	\includegraphics[width=\textwidth]{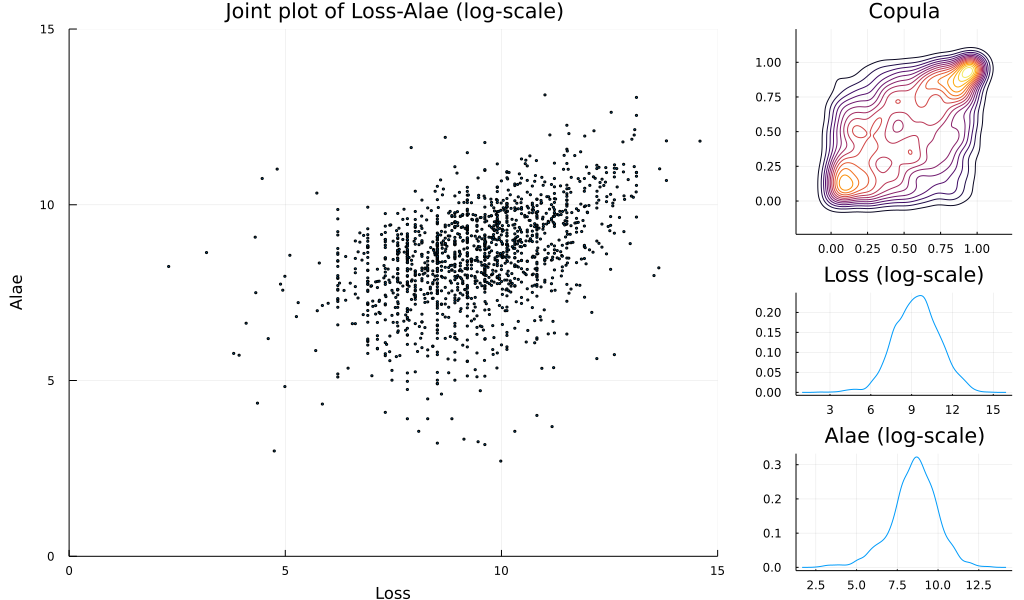}
	\caption{The original Loss-Alae dataset: On the left, a log-scale joint plot of the dataset. 
  On the right, from top to bottom, Gaussian kernel density estimates of the copula and of the two marginals (both log-scale).}%
	\label{fig:lossalae_data}%
\end{figure}

On Figure~\ref{fig:lossalae_data},
  we see that the dependence is somewhat strong between the two marginals.
There is left and right tail dependency, 
  and the marginals are clearly heavy-tailed.

We ran the algorithm with numbers of gammas $n \in (2,3,4,5,10,20)$. 
We also fixed $\bm m = (n,n)$, arbitrarily.
The obtained shapes and rates are given in Table~\ref{tab:loss_alae_comparaison}, in Appendix~\ref{sec:figs_and_tables}. 
We note that all estimators are $\epsilon$-well-behaved, 
  and that, 
  sometimes, 
  the algorithm uses zero scales.
Quantiles-quantiles plots of the marginals and kernel density estimation 
  of the dependence structures for $n=2,5,10,20$ are reported in Figure~\ref{fig:lossalae_final}.

\begin{figure}[H]
	\centering
	\includegraphics[width=\textwidth]{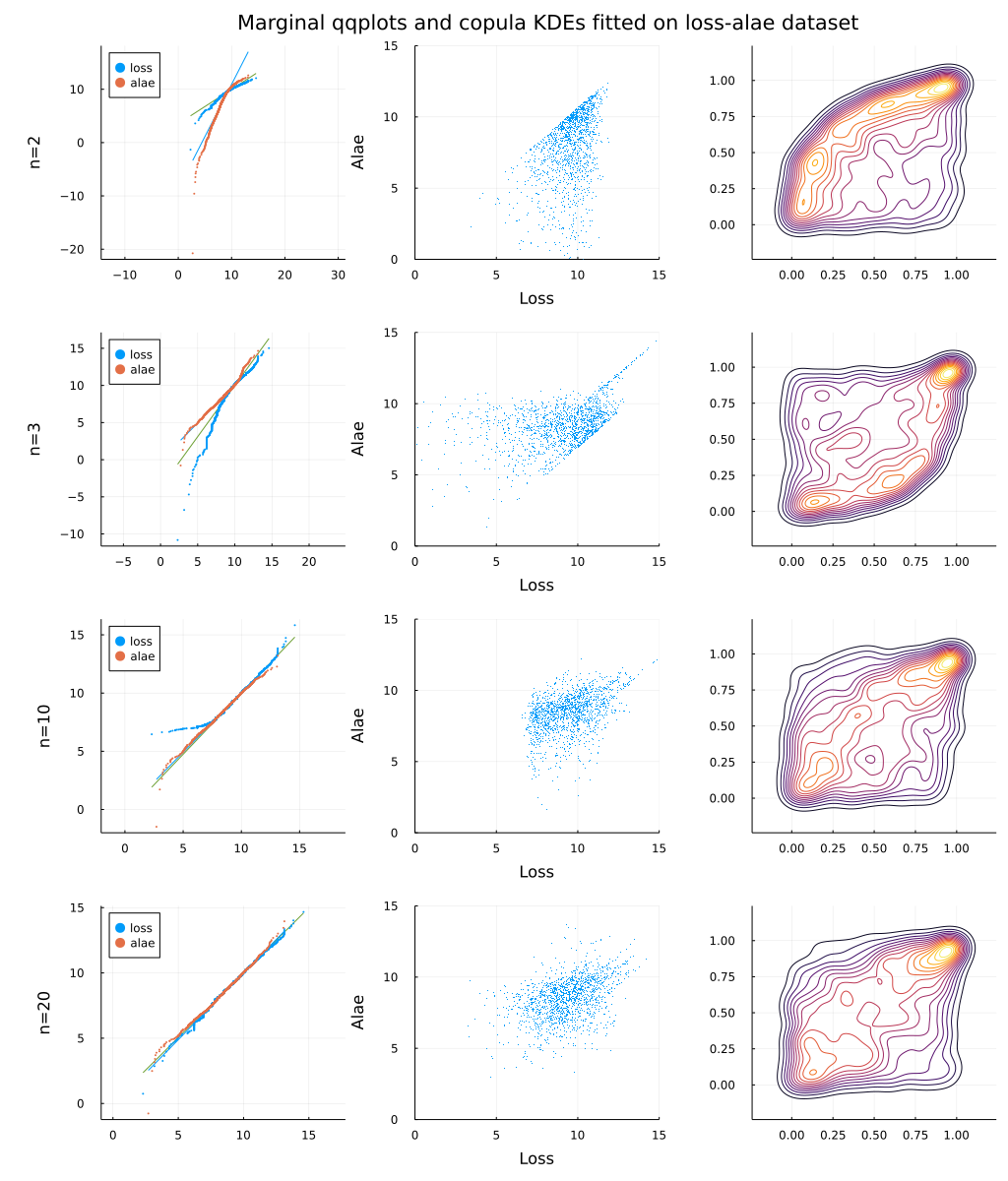}
	\caption{Results of the estimation on the Loss-Alae dataset.
  Each line correspond to a different number of gammas $n \in (2,3,10,20)$.
  The first column gives marginal quantile-quantile plots of Loss and Alae,
    the middle column gives a joint plot on log-scale,
    and the last column represents the dependence structure on the copula scale.}%
	\label{fig:lossalae_final}%
\end{figure}

We observe on the two first lines of Figure~\ref{fig:lossalae_final} a strange behavior for the dependence structure.
This is due to the fact that only one marginal has zeros in the scale matrices
  (see Table~\ref{tab:loss_alae_comparaison} in Appendix~\ref{sec:figs_and_tables}).  
Although unusual, 
  this kind of structure is quite expressive,
  and indeed for bigger values of $n$,
  where we convoluted several of them,
  we managed to capture the dependence structure of the data.
However,
  we have to wait until $n=20$ to obtain good fits on the marginals as well. 
Figure~\ref{fig:marginal_lossalae} reports more information about the final fits $n=20$,
  which shows the parametric smoothing on the marginals and the dependence structure.

\begin{figure}[H]
  \centering
  \includegraphics[width=\textwidth]{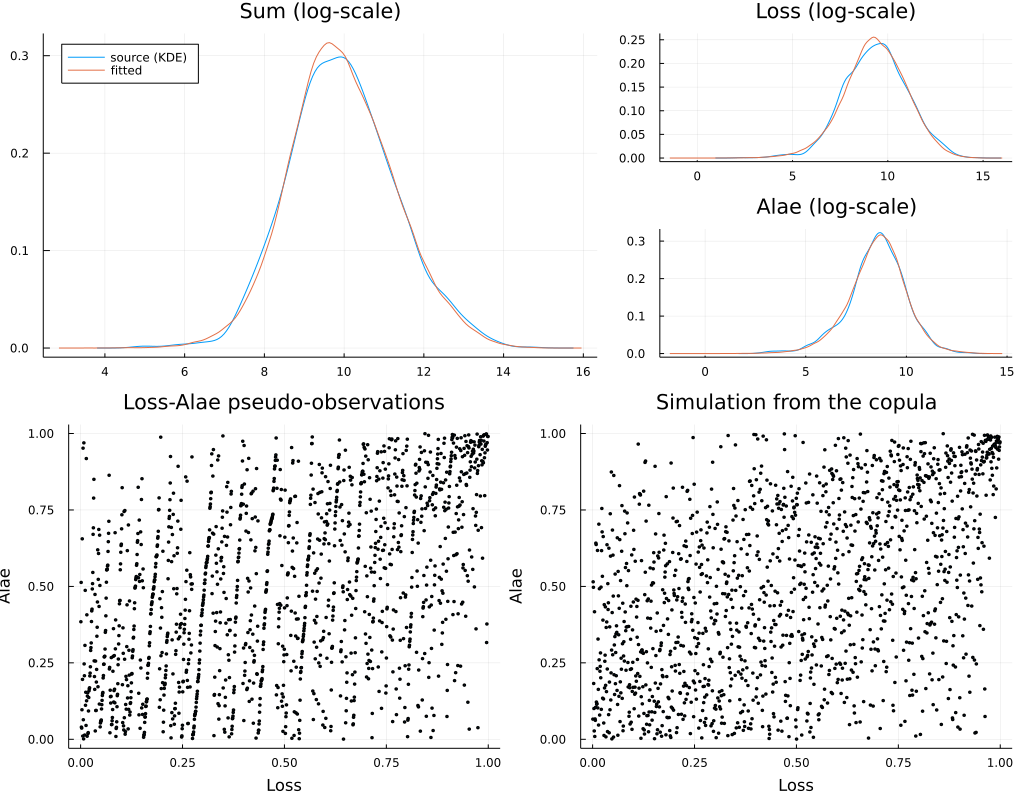}
  \caption{Results for the $n=20$ model. 
  Top left: density of the sum Loss + Alae. 
  Top right: marginal densities.
  Bottom: A comparison between pseudo-observations from a simulation (right) and the original dataset (left).}%
  \label{fig:marginal_lossalae}%
\end{figure}

Simulation from the obtained model is trivial, thanks to its additive structure.
Moreover, 
  the distribution of any linear combination $\langle \bm c, \bm X \rangle$ ($\bm c \ge \bm 0$) 
  is also given as a $\mathcal G_{1,n}(\alpha,\langle \bm c, \bm s \rangle)$.
This additive property is a computationally valuable feature of the model: 
  any result and potential code routine that computes statistics on $\mathcal G_{1,n}$ distribution 
  can be directly used for the marginals and for all linear combinations of them.
Finally, 
  Theorem 4.1.1 in ~\cite{bondesson1992} provides a neat mixture of gammas 
  representation for $\mathcal G_{1,n}$ distributions, 
  with easily computable mixture weights from parameters, 
  which can be useful for applications.

\section{Conclusion}\label{sec:conclusion}

Although the class of gamma convolutions has very appealing properties for the practitioner, 
  the estimation of gamma convolutions from empirical data had not been considered in the literature.
The closest algorithm we could find is the recent MFK algorithm,
  that we described in Subsection~\ref{sec:projection_furman},
  which projects densities from the $\mathcal G_1$ class to the $\mathcal G_{1,n}$ class. 
Unfortunately, extending this algorithm for general estimation purposes is hard if not impossible.

Using a well-chosen Laguerre basis,
  we constructed a series expansion for the density of a distribution in $\mathcal{G}_{d,n}$.
Coupled with a simple $L_2$ loss for the density, 
  we were able to project densities from $\mathcal{G}_d$,
  $d > 1$ onto $\mathcal G_{d,n}$,
  which was not possible,
  but also to project any density onto $\mathcal G_{d,n}$,
  which was not possible either.
Finally,
  through the same loss,
  we can estimate $\mathcal{G}_{d,n}$ distributions from empirical data for any reasonable $d,n$,
  which was also not treated in the literature.

Although the convergence result is not as strong as MFK's one,
  the numerical precision needs and computational speeds are far better.
We saw that given the same information,
  shifted moments of the random variable with high precision, 
  the performance of the two algorithms is overall the same,
  except maybe for small numbers of gammas where we beat MFK.
For both algorithms,
  Figure~\ref{fig:furman_ln_violin} also shows that the model's 
  goodness of fit seems to be decreasing for large numbers of gammas,
  and an adaptive estimator might be a suitable way out.

Maybe a better loss could be found for the estimation of these gamma convolutions.
Furthermore,
  automatic penalization of the model could be done through the number of gammas,
  but also through sparsity of the Thorin measure,
  according to Property~\ref{prop:special_thorin_measures},
  yielding more independent factors when possible.
Such a modification will also provide a clearer view at the additive risk factor structure.

The rate of the estimator will be explored further in future work, 
  using results from the parametric contrast estimators literature,
  as a referee pointed out.

Last but not least,
  many features are still to be discovered about multivariate Thorin classes.
The estimation of multiplicative structures could yield interesting results.
  Bondesson showed that the product of random variables in $\mathcal{G}_1$ is also in $\mathcal{G}_1$,
  but we still do not have a way of constructing the corresponding Thorin measure, 
  neither a corresponding result for the multivariate case.

\paragraph{Acknowledgment} We thank the referees for all the important remarks they made and all resulting improvement to the paper.

\begin{appendix}
  \section{Proofs of auxiliary results}\label{sec:proofs}

  \begin{proof}[Proof of Property~\ref{prop:special_thorin_measures}]

    Consider first atomic Thorin measures,
      so that $\bm X \sim \mathcal G_{d,n}(\bm \alpha,\bm s)$.
  
    For $(i)$ and $(ii)$,
      consider that there exists a matrix $\bm Y$ of gamma random variables
      $Y_{i,j} \sim \mathcal G_{1,1}(\alpha_j,s_{i,j})$
      such that $$X_i = Y_{i,1} + \ldots + Y_{i,n},$$
      where row vectors $\bm Y_{i,.}$ are independent 
      and column vectors $\bm Y_{.,j}$ have two separate groups of marginals:
      a first comonotonic group (with gamma marginals) corresponding to scales $s_{i,j}$ that are strictly positive
      and a second group (with identically 0 marginals) corresponding to null scales.
    Note that one of these two groups might be empty.
    From this expansion,
      we can easily deduce the structure of maximal supports for $\nu$ 
      that leads to independent or comonotonic random vectors $\bm X$.
  
    For $(iii)$, note that $D$ is the rank of the matrix $\bm s$, hence $D \le \min(n,d)$.
    If $D = d = n$,
      then $\bm X$ is by definition an invertible linear transformation of an independent,
      $d$-dimensional random vector,
      and therefore has a density in $L_2$ that we can express easily.
    If $D = d < n$,
      then $\bm X$ is the convolution of the previous case with something else,
      and therefore also has a density in $L_2$.
    The last case,
      $D <d$, 
      gives a random vector $X$ that is a linear transformation of a set of $D < d$ independent variables,
      which is therefore clearly not absolutely continuous as claimed.
  
    Consider non-atomic Thorin measures as limits of the previous case.
  \end{proof}

	\begin{proof}[Proof of Example~\ref{prop:curious_dist}]
		For the first result,
		  note that $\nu$ satisfies the integration constraints given by Definition \ref{univ_ggc_def}.
		Then,
		  simply compute the cumulant generating function as:
		\begin{align*}
		K(-t) &= -\int\limits_{0}^{\infty} \ln\left(1+rt\right) \mathbb 1_{r \in [0,1]} \,d r
      = -\int\limits_{0}^{1} \ln\left(1+rt\right)  \,d r\\ 
      &= -\frac{1}{t}\int\limits_{1}^{1+t} \ln(x) \,d x
		  %&= -\frac{(1+t)\ln(1+t) - (1+t) - 1\ln(1)+1}{t} \\
      = 1 - \frac{1+t}{t}\ln(1+t)\text{.}
		\end{align*}
    The second result comes from a discretization of the Thorin measure,
		  from continuously uniform on $[0,1]$ to uniform on $\left\{\frac{j}{n+1},j \in \{1,...,n\} \right\}$.
    Note that the cumulant generating functions converges on its domain of definition,
      that includes a real interval,
      hence the convergence in distribution. 
	\end{proof}

\begin{proof}[Proof of Example~\ref{ex:d1-inversion}]

  The bijection is given through the following equations:
  $$a_{\bm k}(\alpha,\bm s) = \sqrt{2}^d \sum\limits_{\bm \ell \le \bm k} \binom{\bm k}{\bm \ell} 
    \frac{(-2\bm s)^{\bm \ell}}{\bm \ell !} \frac{\Gamma\left(\alpha + \lvert \bm \ell \rvert\right)}{\Gamma\left(\alpha\right)} 
    \left(1 + \lvert \bm s \rvert\right)^{-\alpha - \lvert \bm \ell \rvert},$$
	on one hand,
  and
	\begin{align*}
		\alpha &= \left\{\frac{1}{c_2} - \frac{1}{\ln\left(c_1\right)} 
    W_0\left(\frac{\ln\left(c_1\right)}{c_2}e^{\frac{\ln\left(c_1\right)}{c_2}}\right)\right\}\\
		s_i &=\frac{a_{\bm 0} - a_{\bm 1(i)}}{2\alpha a_0} \text{ for all $i \in 1,..,d$}
		\end{align*}
		reciprocally,
  where:
		\begin{itemize}
			\item $\bm 1(i)$ is the $d$-variate vector with  $j^{\text{th}}$ component equal to $\mathbb 1_{i=j}$, for all $j$.
			\item $c_1 = a_{\bm 0}\sqrt{2}^{-d}$ and $c_2 = \frac{d}{2} - \frac{1}{2a_{\bm 0}}\sum_{i=1}^d a_{\bm 1(i)}$
			\item $W_0$ is the zeroth branch of the Lambert function $W$ defined by: $$y = W(x) \iff x= ye^y.$$
		\end{itemize}

	To prove the first part,
    note that there exists $X_0 \sim \mathcal G_{1,1}(\alpha,1)$ such that 
    $\bm X = \scalarprod{\bm s}{\left(X_0,...,X_0\right)}$.
  Therefore,
    denoting by $M$ the mgf of $X_0$,
    \begin{align*}
      \mu_{\bm k,-\bm 1} = \mathbb E\left(\bm X^{\bm k} e^{-\scalarprod{\bm 1}{\bm X}}\right)
    &= \bm s^{\bm k}\mathbb E\left(X_0^{\lvert \bm k \rvert} e^{-\lvert \bm s \rvert X_0}\right)\\
    &= \bm s^{\bm k} M^{\left(\lvert \bm k \rvert\right)}(-\lvert \bm s \rvert)\\
    &= \bm s^{\bm k} \frac{\Gamma\left(\alpha + \lvert \bm k \rvert\right)}{\Gamma\left(\alpha\right)} (1 + \lvert \bm s \rvert)^{-\alpha - \lvert \bm k \rvert}.
    \end{align*}
	
	Therefore,
    the Laguerre coefficients $a_{\bm k}\left(\alpha,\bm s\right)$ of $\bm X$ are given by:
    $$a_{\bm k}(\alpha,\bm s) = \sqrt{2}^d \sum\limits_{\bm \ell \le \bm k} \binom{\bm k}{\bm \ell} \frac{(-2\bm s)^{\bm \ell}}{\bm \ell !} 
      \frac{\Gamma\left(\alpha + \lvert \bm \ell \rvert\right)}{\Gamma\left(\alpha\right)} 
      \left(1 + \lvert \bm s \rvert\right)^{-\alpha - \lvert \bm \ell \rvert}.$$

	Then,
    for the reverse assertion,
    consider the expression of the first Laguerre coefficients of $\bm X$:
    $$a_{\bm 0} = \sqrt{2}^d (1 - \lvert \bm s \rvert)\text{ and }a_{\bm 1(i)} 
      = a_{\bm 0}\left(1 - 2\alpha \frac{s_i}{1 + \lvert \bm s\rvert}\right).$$
  Use the simplex transformation $\bm x = \frac{\bm s}{1 + \lvert\bm s\rvert}$ 
    to solve for $\bm s$ as a function of $\alpha$ in the second equation,
    and inject in the first.
	We obtain an equation of the form $ae^x + bx +c$ which is solved by the zeroth branch of the Lambert W function.
\end{proof}

\begin{proof}[Proof of Lemma~\ref{lem:mobius}]
  Point $(i)$ and $(ii)$ are obvious. 
  To prove the 3 remaining points,
    we use the fact that $h$ is a Möbius transform,
    see~\cite{arnold2008},
    and hence maps generalized circles to generalized circles.
  Furthermore,
    $h$ maps reals to reals,
    and we obtain $c(b)$ and $r(b)$ from the expression of $h(b)$ and $h(-b)$.
  Last, by checking if the disk with center $c(b)$ and radius $r(b)$ contains $h(0) = -1$, we conclude. 
\end{proof}

\begin{proof}[Proof of Example~\ref{ex:wb}]
  The three first cases are deduced directly from Definition~\ref{def:eps_wb} and Property~\ref{prop:epsilon_wb_2}. 
  For point $(iv)$,
    recall that for two gamma convolutions with Thorin measures $\nu_1$ and $\nu_2$, the Thorin measure of the sum is:
    $$\nu(\bm s) = \nu_1(\bm s)+\nu_2(\bm s).$$
  Now, according to the reading of Property~\ref{prop:epsilon_wb_2}, 
    point $(iv)$,
    no set of $p <d$ rays $S_{\bm c_1},...,S_{\bm c_p}$ concentrate more than half 
    of $\nu_1$ weights and more than half of $\nu_2$ weights.
  Hence,
    no set of $p<d$ rays can concentrate more than half of $\nu$ weights.
  Therefore, the class of w.b. gamma convolutions is closed w.r.t finite convolutions.
\end{proof}

  \section{Auxiliary figures and tables}\label{sec:figs_and_tables}

\begin{figure}[H]
	\centering
	\includegraphics[width=\textwidth]{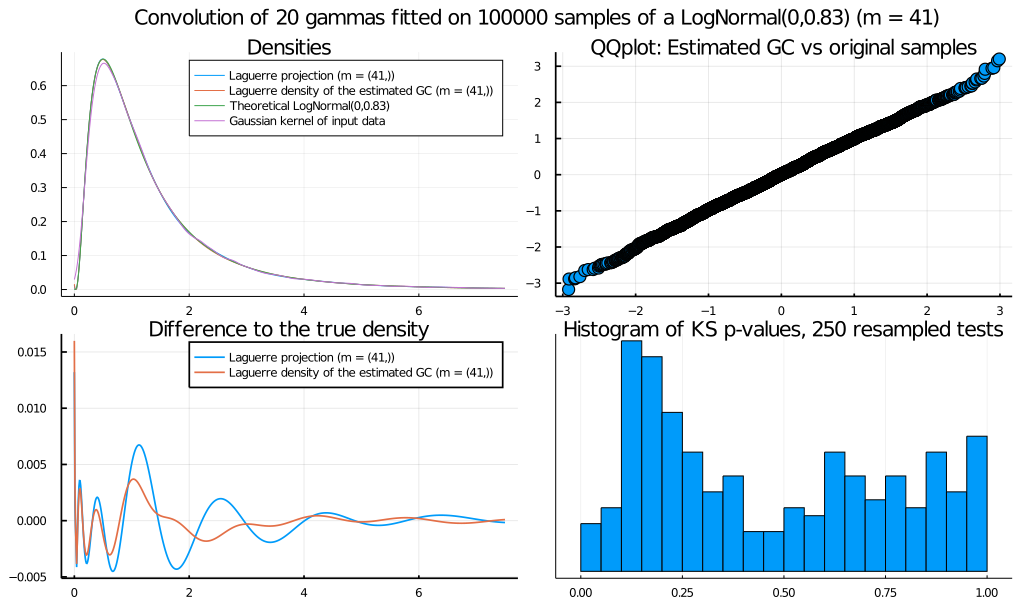}
	\caption{Log-Normal results with $20$ gammas.
			 Same legend as Figure~\ref{fig:log-Normal10}.}%
	\label{fig:log-Normal20}%
\end{figure}

\begin{figure}[H]
	\centering
	\includegraphics[width=\textwidth]{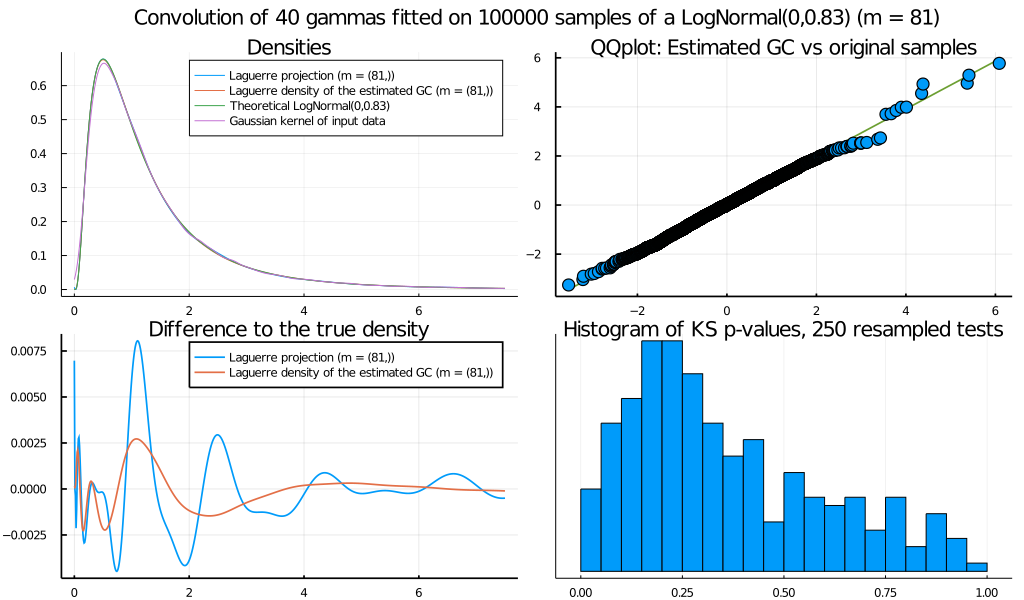}
	\caption{Log-Normal results with $40$ gammas.
			 Same legend as Figure~\ref{fig:log-Normal10}.}%
	\label{fig:log-Normal40}%
\end{figure}

  \begin{DIFnomarkup}
    \begin{table}[H]
    \scriptsize
    \caption{\label{tab:loss_alae_comparaison}Estimated Thorin measures on Loss-Alae dataset, for several number of atoms. 
    Scales below $10^{-10}$ are considered to be $0$.}
    \centering
    \begin{tabu} to \linewidth {>{\raggedright}X>{\raggedleft}X>{\raggedleft}X>{\raggedleft}X}
    \toprule
    & \(\hat{\bm \alpha}\) & \(\hat{\bm s}_{.,1}\) & \(\hat{\bm s}_{.,2}\)\\
    \midrule
    \addlinespace[0.3em]
    \hspace{1em}\textbf{$n=2$} & $8.0530e-01$ & $1.7859e+04$ & $0$\\
    \hspace{1em}               & $3.7345e-01$ & $1.9294e+04$ & $3.9144e+04$\\
    \addlinespace[0.3em]
    \hspace{1em}\textbf{$n=3$} & $6.2517e-01$ & $0$ & $9.4232e+03$\\
    \hspace{1em}               & $4.0146e-01$ & $5.9577e+04$ & $2.8870e+03$\\
    \hspace{1em}               & $2.8491e-02$ & $7.4721e+05$ & $5.1605e+05$\\
    \addlinespace[0.3em]
    \hspace{1em}\textbf{$n=4$} & $4.2930e-01$ & $8.9338e+02$ & $4.5100e+02$\\
    \hspace{1em}               & $2.7618e-01$ & $7.5227e+03$ & $2.1992e+04$\\
    \hspace{1em}               & $1.6763e-01$ & $1.1016e+05$ & $2.1315e+03$\\
    \hspace{1em}               & $2.9678e-02$ & $7.7123e+05$ & $1.6315e+05$\\
    \addlinespace[0.3em]
    \hspace{1em}\textbf{$n=5$} & $4.4402e-01$ & $1.9749e+02$ & $1.2632e+03$\\
    \hspace{1em}               & $3.8222e-01$ & $2.5404e+04$ & $3.2267e+02$\\
    \hspace{1em}               & $3.0692e-01$ & $6.8942e+03$ & $1.8928e+04$\\
    \hspace{1em}               & $4.9173e-02$ & $3.0236e+05$ & $4.9963e+03$\\
    \hspace{1em}               & $4.3674e-02$ & $4.0133e+05$ & $7.5906e+04$\\
    \addlinespace[0.3em]
    \hspace{1em}\textbf{$n=10$} & $5.8686e-01$ & $6.7317e+01$ & $0$\\
    \hspace{1em}               & $4.1458e-01$ & $0$ & $4.2265e+03$\\
    \hspace{1em}               & $3.8097e-01$ & $0$ & $4.2032e+03$\\
    \hspace{1em}               & $1.8363e-01$ & $3.0710e+04$ & $0$\\
    \hspace{1em}               & $1.4472e-01$ & $2.0463e+04$ & $1.6083e+04$\\
    \hspace{1em}               & $5.8809e-02$ & $3.0346e+04$ & $0$\\
    \hspace{1em}               & $5.3588e-02$ & $5.8625e+03$ & $5.0686e+04$\\
    \hspace{1em}               & $4.9132e-02$ & $1.8103e+05$ & $1.6242e+03$\\
    \hspace{1em}               & $3.9180e-02$ & $1.9806e+05$ & $4.0709e+04$\\
    \hspace{1em}               & $1.8969e-02$ & $9.4263e+05$ & $6.6635e+04$\\
    \addlinespace[0.3em]
    \hspace{1em}\textbf{$n=20$} & $5.4500e-01$ & $0$ & $4.4628e+03$\\
    \hspace{1em}               & $2.3101e-01$ & $3.4445e+03$ & $0$\\
    \hspace{1em}               & $1.8010e-01$ & $0$  & $4.4628e+03$\\
    \hspace{1em}               & $1.5530e-01$ & $3.2021e+04$ & $5.7533e+02$\\
    \hspace{1em}               & $1.5374e-01$ & $3.4445e+03$ & $0$ \\
    \hspace{1em}               & $1.0508e-01$ & $3.4445e+03$ & $0$ \\
    \hspace{1em}               & $9.4038e-02$ & $3.4445e+03$ & $0$ \\
    \hspace{1em}               & $8.6941e-02$ & $9.6571e+04$ & $7.8920e+01$\\
    \hspace{1em}               & $8.3410e-02$ & $1.3871e+04$ & $2.0096e+04$\\
    \hspace{1em}               & $5.0148e-02$ & $3.4445e+03$ & $0$ \\
    \hspace{1em}               & $4.0136e-02$ & $1.3871e+04$ & $2.0096e+04$\\
    \hspace{1em}               & $3.4315e-02$ & $0$  & $4.4628e+03$\\
    \hspace{1em}               & $3.3641e-02$ & $2.3235e+05$ & $3.6468e+04$\\
    \hspace{1em}               & $3.1375e-02$ & $1.3289e+05$ & $5.2870e+04$\\
    \hspace{1em}               & $3.0619e-02$ & $0$  & $4.4628e+03$\\
    \hspace{1em}               & $1.4615e-02$ & $1.3871e+04$ & $2.0096e+04$\\
    \hspace{1em}               & $1.4270e-02$ & $6.5957e+05$ & $3.1213e+04$\\
    \hspace{1em}               & $8.5642e-03$ & $3.9661e+03$ & $3.3040e+05$\\
    \hspace{1em}               & $4.6750e-03$ & $4.5138e+04$ & $2.3531e+05$\\
    \hspace{1em}               & $1.2388e-13$ & $4.1893e+03$ & $0$\\
      \bottomrule
      \end{tabu}
    \end{table}
    \end{DIFnomarkup}
    
\end{appendix}

% Main text entry area

%%%%%%%%%%%%%%%%%%%%%%%%%%%%%%%%%%%%%%%%%%%%%%
%% Supplementary Material, if any, should   %%
%% be provided in {supplement} environment  %%
%% with title and short description.        %%
%%%%%%%%%%%%%%%%%%%%%%%%%%%%%%%%%%%%%%%%%%%%%%
%\begin{supplement}
%\stitle{???}
%\sdescription{???.}
%\end{supplement}

%% if your bibliography is in bibtex format, uncomment commands:
\bibliographystyle{imsart-number} % Style BST file (imsart-number.bst or imsart-nameyear.bst)
\bibliography{rfm}       % Bibliography file (usually '*.bib')

%% or include bibliography directly:
% \begin{thebibliography}{}
% \bibitem{b1}
% \end{thebibliography}

\end{document}